\documentclass[12pt]{article}

\usepackage[utf8]{inputenc}
\usepackage{amsfonts,amssymb}
\usepackage{amsmath,amsthm}
\usepackage[dvipsnames]{xcolor}

\usepackage[margin=1in]{geometry}

\newtheorem{theorem}{Theorem}[section]
\newtheorem{lemma}[theorem]{Lemma}
\newtheorem{proposition}[theorem]{Proposition}
\newtheorem{corollary}[theorem]{Corollary}
\newtheorem{definition}[theorem]{Definition}
\newtheorem{question}[theorem]{Question}

\newcommand\F{\mathbb{F}}
\newcommand\R{\mathbb{R}}

\newcommand\cF{\mathcal{F}}
\newcommand\cJ{\mathcal{J}}
\newcommand\cM{\mathcal{M}}
\newcommand\cS{\mathcal{S}}

\newcommand\al{\alpha}
\newcommand\bt{\beta}
\newcommand\Dl{\Delta}
\newcommand\lm{\lambda}
\newcommand\Lm{\Lambda}

\newcommand\ad{\mathrm{ad}}

\title{Radicals in primitive axial algebras}
\author{
A.~Mamontov\footnote{This author was suppoorted by the Mathematical Center in Akademgorodok under the agreement No. 075-15-2025-348 with the Ministry of Science and Higher Education of the Russian Federation.}, 
S.~Shpectorov and 
V.~Zhelyabin\footnote{The research was carried out within
the framework of the Sobolev Institute of Mathematics
state contract (project FWNF-2026-0017).}}

\begin{document}
\maketitle

\newcommand{\Addresses}{{
		\bigskip\noindent
		\footnotesize
        Andrey~Mamontov, \textsc{Sobolev Institute of Mathematics, Novosibirsk, Russia;}\\\nopagebreak
		\textit{E-mail address: } \texttt{mamontov@math.nsc.ru}
		\\\nopagebreak	
        ORCID:\textsc{0000-0002-0324-5287}
        
        \medskip\noindent
        Sergey~Shpectorov, \textsc{School of Mathematics, University of Birmingham, Birmingham, UK;}\\\nopagebreak
        \textit{E-mail address: } 
        \texttt{s.shpectorov@bham.ac.uk}
        \\\nopagebreak
        ORCID:\textsc{0000-0001-6202-5885}
		
		\medskip\noindent
        Victor~Zhelyabin, \textsc{Sobolev Institute of Mathematics, Novosibirsk, Russia;}\\\nopagebreak
		\textit{E-mail address: } \texttt{vicnic@math.nsc.ru}
		\\\nopagebreak
		ORCID:\textsc{0000-0001-9371-6363}
        	
		\medskip
}}

\section{Introduction}

The class of axial algebras was introduced in \cite{hrs,hrs1} as a broad 
generalization of the class of Majorana algebras of Ivanov \cite{i}. These are 
commutative non-associative algebras generated by (primitive) idempotents called axes, 
whose adjoint action is semisimple and satisfies a prescribed fusion law. (See the 
exact definitions in Section \ref{axial algebras}.) Axial algebras is currently a 
very active area of research, with many strong results obtained particularly about 
two specific classes of primitive axial algebras, the algebras of Monster type 
$(\al,\bt)$, generalizing Majorana algebras, and the more narrow class of algebras of 
Jordan type $\eta$ including the axial Jordan algebras and the Matsuo algebras 
corresponding to $3$-transposition groups. For a recent survey of axial algebras, see 
\cite{ms}. 

The structure theory of primitive axial algebras was developed in \cite{kms}. In 
particular, this paper introduced the concept of the radical of an axial algebra $A$, 
as the largest ideal $R(A)$ not containing any of the generating axes. When $A$ 
is endowed with a Frobenius form $(\cdot,\cdot)$, a bilinear form associating with the algebra 
product, the axial radical $R(A)$ is contained in the Frobenius form radical 
$A^\perp$, with the equality taking place if and only if all generating axes are 
non-singular for the form. 

In this note we continue developing the structure theory of primitive axial algebras 
by comparing the above two radicals, $R(A)$ and $A^\perp$, with the Jacobson radical 
$J(A)$, which we define simply as the intersection of all maximal ideals of $A$. 
Our main result is as follows.

\begin{theorem} \label{main}
If $A$ is a primitive axial algebra with a Frobenius form $(\cdot,\cdot)$ then 
$R(A)\subseteq J(A)\subseteq A^\perp$.
\end{theorem}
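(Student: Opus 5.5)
Two inclusions are needed, and I would prove them separately. Throughout I use the standard facts from \cite{kms}. In a primitive axial algebra $A$ generated by a set $X$ of axes, an ideal $I$ lies in $R(A)$ if and only if it contains no axis of $X$. The projection of $I$ onto the $1$-eigenspace of any $a\in X$ is $I\cap\F a$, so $\lm_a(I)=0$ for every $u\in I$ when $a\notin I$. Moreover, if $A$ has a Frobenius form, then $A^\perp$ is an ideal, and $(a,u)=(a,a)\lm_a(u)$ for $u\in A$. Here $\lm_a(u)$ is the coefficient of $a$ in the decomposition of $u$ along the eigenspaces of $\ad_a$.

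\emph{First inclusion $R(A)\subseteq J(A)$.} Let $M$ be a maximal ideal. I will show $R(A)\subseteq M$. Suppose not. Then $M+R(A)=A$ by maximality, since $M+R(A)$ is an ideal strictly containing $M$. Choose an axis $a\in X$ with $a\notin M$; such an axis exists because otherwise $M\supseteq\langle X\rangle=A$. Write $a=m+r$ with $m\in M$ and $r\in R(A)$. Project onto the $1$-eigenspace of $\ad_a$. The projection of $r$ is $\lm_a(r)a=0$, because $R(A)$ contains no axes. The projection of $m$ lies in $M$, because ideals are invariant under $\ad_a$ and hence under the eigenprojections, which are polynomials in $\ad_a$. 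So $a=\lm_a(m)a\in M$, which is a contradiction. Hence $R(A)\subseteq M$ for every maximal $M$, and so $R(A)\subseteq J(A)$. If $A$ has no maximal ideals, then $J(A)=A$ and the statement is trivial.

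\emph{Second inclusion $J(A)\subseteq A^\perp$.} Let $u\in J(A)$. Since $A$ is spanned by products of axes, it suffices to show $(a,u)=0$ for all $a\in X$, or alternatively to show $(w,u)=0$ for arbitrary $w$ using associativity. Given $a\in X$, I would aim to find a maximal ideal $M$ with $\lm_a(M)=0$. Then $\lm_a(u)=0$, because $u\in M$, and so $(a,u)=(a,a)\lm_a(u)=0$.

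To find such an $M$, consider $I_a=\{v\in A:\lm_a(v)=0\}$. This is a subspace of codimension at most $1$ not containing $a$. I would then pass to the largest ideal contained in $I_a$, and extend it by Zorn's lemma to an ideal $M$ that is maximal among ideals not containing $a$. The claim is that $M$ is a maximal ideal of $A$. Any ideal $N\supsetneq M$ contains $a$. If $N\ne A$, consider $A/M$. There the image of $a$ lies in every nonzero ideal, so $A/M$ has a unique minimal ideal $\langle\bar a\rangle$. I would then argue that $\langle\bar a\rangle=A/M$, using primitivity: $\bar a$ is still a primitive axis, or zero, and the images of the other axes either equal $0$ or generate. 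Alternatively, one can note that $M\subseteq I_a$, since $v\in M$ with $\lm_a(v)\ne0$ would put $a\in M$ after projection. In that case, for $u\in J(A)\subseteq$ (every maximal ideal) it is enough to know that $J(A)\subseteq M$. This requires showing that ideals maximal with respect to not containing $a$ contain $J(A)$.

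The main obstacle is this last step. I would try first to avoid it by arguing directly. Suppose $\lm_a(u)\neq0$ for some $u\in J(A)$. Projecting onto the $1$-eigenspace of $a$ shows $a\in J(A)$. Then I would show that no axis lies in $J(A)$. If $a\in J(A)$, then for every maximal ideal $M$ we have $a\in M$. But the ideal $M_a$ maximal among those avoiding $a$ exists and is proper, and any ideal strictly containing it contains $a$. By the Zorn argument applied to $A/M_a$, a maximal ideal containing $M_a$ exists and is proper. One must then check that it cannot contain $a$, using that $\bar a$ generates $A/M_a$ modulo the ideal generated by the other axes. This is where the primitive-axis structure of $A/M_a$, and the fact that its unique minimal ideal contains $\bar a$, must be exploited, and I expect this to require the most care.
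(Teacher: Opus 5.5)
Your proof of $R(A)\subseteq J(A)$ is correct and is the paper's argument, with Lemma~\ref{sum} written out inline. Your reduction for the second inclusion is also sound. Since $J(A)$ is an ideal and $(a,u)=(a,a)\lambda_a(u)$, it suffices that every $a\in X$ with $(a,a)\neq0$ avoids some maximal ideal. Zorn is not needed: every ideal avoiding $a$ lies in $\ker\lambda_a$, so their sum $M_a$ is the unique largest such ideal, and $a\notin J(A)$ if and only if $M_a$ is a maximal ideal.

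\textbf{The gap.} You never prove that $M_a$ is maximal, and nothing you sketch uses the Frobenius form. Without the form, this is exactly the paper's open question of whether $J(A)$ can contain an axis. Your approach breaks down precisely when domination is non-symmetric. Write $I_a$ for the block of $a$ (not your $\ker\lambda_a$), and suppose $a\in I_b$ but $b\notin I_a$. Then $b\notin M_a$, since otherwise $a\in I_b\subseteq M_a$. By Lemma~\ref{sum}, $I_a+M_a$ is then a proper ideal strictly containing $M_a$, so no maximal ideal avoids $a$. Primitivity of $\bar a$ in $A/M_a$ therefore cannot by itself give you what you need.

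\textbf{The missing idea} is to use non-degeneracy in $\bar A=A/A^\perp$, where $\bar a\neq0$ because $(a,a)\neq0$.
\begin{enumerate}
\item[(i)] Let $U$ be an ideal of $\bar A$. Each axis $b\notin U$ is orthogonal to $U$: the $1$-component of $u\in U$ with respect to $b$ is a multiple of $b$ lying in $U$, hence zero.
\item[(ii)] By induction, using $(U,m'm'')=(Um',m'')$, all products of such axes are orthogonal to $U$.
\item[(iii)] For products $m_1$ of axes in $U$ and $m_2$ of axes outside $U$, $(\bar A,m_1m_2)=(\bar Am_1,m_2)=0$, so non-degeneracy forces $m_1m_2=0$.
\item[(iv)] This gives $\bar A=A_1\oplus A_2$ with $U=A_1$ (Proposition~\ref{decomposition}). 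Hence domination is symmetric and $\bar A=\bigoplus_c I_c$ with simple blocks (Proposition~\ref{semisimple}).
\end{enumerate}
The sum of the blocks other than the one containing $\bar a$ is then a maximal ideal of $\bar A$ avoiding $\bar a$. Its preimage is a maximal ideal of $A$ avoiding $a$, which is what your reduction requires. Without this step, $J(A)\subseteq A^\perp$ is not established.
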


Clearly, this means that if all generating axes are non-singular with respect to the 
Frobenius form, which happens in the vast majority of known examples, then 
$R(A)=J(A)=A^\perp$.

Note that in this theorem we allow the Frobenius form to be zero, which means our statement covers all primitive axial algebras. When the Frobenius form is zero, $A^\perp=A$, but we still have the non-obvious statement $R(A)\subseteq J(A)$.

\medskip
In the remainder of the text, we discuss the possibility of the three radicals not 
being equal. While the equality $J(R)=A^\perp$ heavily depends on the choice of the 
Frobenius form, it is possible that the equality $R(A)=J(A)$ is more universal and we 
suggest various ideas that may help either to prove or disprove this assertion. We 
also show that the hull-kernel topology on the set of maximal ideals of a primitive 
axial algebra $A$ is discrete. Towards the end, we list some open questions.

\section{Axial algebras} \label{axial algebras}

In this section we provide the necessary background information on axial algebras. 
First of all, all algebras in this note are non-associative, that is, associativity 
is not assumed. 

\begin{definition}
A \emph{fusion law} is a pair $(\cF,\ast)$, where $\cF$ is a set (typically a subset 
of a field $\F$) and $\ast$ is a binary operation on $\cF$ with values in the set 
$2^{\cF}$ of all subsets of $\cF$, that is, a map $\cF\times\cF\to 2^{\cF}$. 
\end{definition}

We will usually simply write $\cF$ for the fusion law $(\cF,\ast)$, i.e., the operation $\ast$ will be tacitly assumed.

If $A$ is a commutative algebra over a field $\F$ and $a\in A$, we write $\ad_a$ 
for the \emph{adjoint action} of $a$ on $A$, that is, the linear map $A\to A$ given 
by $u\mapsto au$. Since $A$ is commutative, we do not need to distinguish between the 
left and right adjoints, and so we simply speak of the adjoin action.

For $\lm\in\F$, we write $A_\lm(a)$ for the $\lm$-eigenspace of $\ad_a$, that is, 
$$
A_\lm(a)=\{u\in A\mid au=\lm u\}.
$$
Note that $A_\lm(a)$ is non-zero if and only if $\lm$ is an eigenvalue of $\ad_a$. 
For a subset $\Lm\subseteq\F$, we set 
$$
A_\Lm(a)=\oplus_{\lm\in\Lm}A_\lm(a).
$$

\begin{definition}
Suppose that $\cF\subseteq\F$ is a fusion law. A non-zero idempotent $a\in A$ is an 
\emph{axis} for the fusion law $\cF$ if 
\begin{enumerate}
\item[(a)] $A=A_\cF(a)$, and
\item[(b)] $A_\lm(a)A_\mu(a)\subseteq A_{\lm\ast\mu}(a)$.
\end{enumerate}
\end{definition}

In other words, the adjoint action of $a$ must be semisimple with all eigenvalues in 
$\cF$. Furthermore, the operation $\ast$ on $\cF$ controls products of eigenvectors 
of $\ad_a$.

Note that the above definition requires $\cF$ to contain $1$. Indeed, 
$\ad_a(a)=a^2=a$, which means that $a\in A_1(a)$, and so $A_1(a)$ is non-zero. In 
what follows we always assume that $1\in\cF$.

\begin{definition}
An axis $a\in A$ is said to be \emph{primitive} if $A_1(a)$ is $1$-dimensional, 
spanned by $a$.
\end{definition}

We are now prepared for the main definition.

\begin{definition}
Suppose that $\cF\subseteq\F$ is a fusion law. An axial algebra for the fusion law 
$\cF$ is a pair $(A,X)$, where $A$ is a commutative algebra over $\F$ and $X\subset 
A$ is a set of axes for $\cF$ generating $A$.

An axial algebra $(A,X)$ is said to be \emph{primitive} if all axes in $X$ are 
primitive.
\end{definition}

While non-primitive axial algebras may also be interesting, large parts of the theory 
of axial algebras depend on the primitivity condition. Hence below we will always 
assume by default that all axial algebras we consider are primitive. 

We will often speak of an axial algebra $A$ without mentioning the specific set of 
generating axes $X$. However, $X$ is always assumed, as $A$ may in general have many 
different generating sets of axes for different fusion laws. 

The structure theory for primitive axial algebras was developed in \cite{kms}. In 
particular, this paper introduced the concept of the radical of a primitive axial 
algebra.

\begin{definition}
The \emph{radical} $R(A)$ of a primitive axial algebra $A$ is the unique largest 
ideal of $A$ not containing any axes from $X$.
\end{definition}

The existence of the radical follows from Lemma 4.2 in \cite{kms}, see the discussion 
after Definition 4.3 there. As we consider several different radicals in this note, 
we will refer to $R(A)$ as the \emph{axial radical} of $A$.

Every ideal of $A$ is invariant under the adjoint action of an axis $a$. This leads 
to the following result that we will need later. First, we introduce the following 
notation and related terminology. Since $A=A_\cF(a)=\oplus_{\lm\in\cF}A_\lm(a)$, 
every $u\in A$ can be uniquely decomposed as 
$$
u=\sum_{\lm\in\cF}u_\lm,
$$
where $u_\lm\in A_\lm(a)$ for all $\lm\in\cF$. We will call $u_\lm$ the 
\emph{components} of $u$ with respect to the axis $a$.

\begin{lemma} \label{component}
If $I$ is an ideal of $A$ and $u\in I$ then each component $u_\lm$ of $u$ with 
respect to an axis $a\in A$ is also contained in $I$.
\end{lemma}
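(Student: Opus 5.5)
The plan is to express each component $u_\lm$ as a polynomial in $\ad_a$ applied to $u$. Since $I$ is an ideal, it is invariant under $\ad_a$. So any polynomial $p(\ad_a)$ without constant term maps $I$ into $I$. Polynomials with a constant term also preserve $I$, because $I$ is a subspace and the identity map preserves it.

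First I would reduce to finitely many eigenvalues. A given $u$ has only finitely many non-zero components, say $u=\sum_{\lm\in\Lm}u_\lm$ with $\Lm\subseteq\cF$ finite and the $\lm\in\Lm$ pairwise distinct elements of $\F$. (If $\cF$ is infinite we still only need the finite set $\Lm$ of eigenvalues actually occurring in $u$.)

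Next I would use Lagrange interpolation. For a fixed $\mu\in\Lm$, define
$$
p_\mu(x)=\prod_{\lm\in\Lm,\ \lm\neq\mu}\frac{x-\lm}{\mu-\lm}\in\F[x].
$$
Since $\ad_a u_\lm=\lm u_\lm$, we have $p_\mu(\ad_a)u_\lm=p_\mu(\lm)u_\lm$. This equals $u_\mu$ when $\lm=\mu$ and $0$ otherwise. Hence $p_\mu(\ad_a)u=u_\mu$. As noted above, $p_\mu(\ad_a)$ is a linear combination of the identity and powers of $\ad_a$, each of which maps $I$ into $I$. Therefore $u_\mu\in I$.

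For $\lm\in\cF\setminus\Lm$ the component $u_\lm$ is zero, so it lies in $I$ trivially. There is no real obstacle here. The only points needing care are that the eigenvalues are distinct field elements, so the denominators $\mu-\lm$ are non-zero, and that the sum is finite, so a single polynomial suffices.
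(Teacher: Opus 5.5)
Your proof is correct and is essentially the paper's approach. The paper justifies the lemma only by remarking that every ideal is invariant under $\ad_a$, and your Lagrange-interpolation polynomials $p_\mu(\ad_a)$ are the standard way to make that remark rigorous.
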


We will be assuming that $A$ is endowed with a bilinear forms associating with the 
algebra product.

\begin{definition}
A \emph{Frobenius form} on an axial algebra $A$ is a bilinear form $(\cdot,\cdot)$ 
such that 
$$
(uv,w)=(u,vw)
$$
for all $u,v,w\in A$.
\end{definition}

We note that we are really only interested in the non-zero Frobenius forms, and in 
most sources this is explicitly assumed. In the current note, for uniformity, it is 
convenient to assume that our arbitrary axial algebra admits a Frobenius form, which 
can only be guarantied if we allow the form to be zero. 

Let us mention some basic facts related to this concept. First of all, it is well 
known that a Frobenius form is always symmetric, that is,
$$
(u,v)=(v,u)
$$
for all $u,v\in A$.

\begin{lemma} \label{orthogonal}
For an axis $a\in A$ and $\lm,\mu\in\cF$, $\lm\neq\mu$, the eigenspaces $A_\lm(a)$ 
and $A_\mu(a)$ are orthogonal with respect to the Frobenius form on $A$.
\end{lemma}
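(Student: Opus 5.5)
The plan is to use that $\ad_a$ is self-adjoint with respect to the Frobenius form, and then apply the standard argument for eigenvectors of a self-adjoint operator. Take $u\in A_\lm(a)$ and $v\in A_\mu(a)$ with $\lm\neq\mu$; we want to show $(u,v)=0$.

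First, the Frobenius identity with the product written appropriately gives
$$
(au,v)=(ua,v)=(u,av),
$$
where the first equality is commutativity of $A$ and the second is the defining property $(xy,z)=(x,yz)$ applied with $x=u$, $y=a$, $z=v$. Thus $\ad_a$ is self-adjoint with respect to $(\cdot,\cdot)$. This uses only the associativity of the form, not its symmetry.

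Second, substitute the eigenvalue relations. The left side is $(au,v)=(\lm u,v)=\lm(u,v)$, and the right side is $(u,av)=(u,\mu v)=\mu(u,v)$. Hence $(\lm-\mu)(u,v)=0$. Since $\lm\neq\mu$ in the field $\F$, we conclude $(u,v)=0$.

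By bilinearity, this extends from individual eigenvectors to the whole eigenspaces, so $A_\lm(a)\perp A_\mu(a)$. No real obstacle is expected. The only point to check is that the Frobenius identity is applied with the factors in the right order, and commutativity of $A$ takes care of that. The argument also works when the form is zero, since then the statement is trivial.
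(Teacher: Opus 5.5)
Your proof is correct, and it is the standard argument behind this lemma. The paper states the lemma as a well-known basic fact without writing out a proof, and the intended reasoning is exactly yours: $\ad_a$ is self-adjoint, since $(au,v)=(u,av)$, so $(\lambda-\mu)(u,v)=0$.
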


We use the notation $A^\perp$ for the radical of the Frobenius form. That is, 
$$
A^\perp=\{u\in A\mid(u,v)=0\mbox{ for all }v\in A\}.
$$

We note that $A^\perp$ is an ideal of $A$, and it turns out that, under some mild 
extra assumptions, $A^\perp$ coincides with the axial radical $R(A)$. Indeed, Lemma 
\ref{orthogonal} implies that $a\in A_1(a)$ is orthogonal to all of 
$A_{\cF-\{1\}}(a)$. Hence, we have the following.

\begin{lemma} \label{axis}
If an axis $a\in A$ is primitive then $a\in A^\perp$ if and only if $(a,a)=0$.
\end{lemma}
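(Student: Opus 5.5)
The plan is to test $a$ against an arbitrary vector by decomposing that vector into its components with respect to $a$. One direction is immediate: if $a\in A^\perp$, then $(a,v)=0$ for every $v\in A$, and taking $v=a$ gives $(a,a)=0$.

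For the converse, assume $(a,a)=0$ and take an arbitrary $v\in A$. Since $a$ is an axis, $A=A_\cF(a)$, so $v$ decomposes into components as
$$
v=\sum_{\lm\in\cF}v_\lm, \qquad v_\lm\in A_\lm(a).
$$
By bilinearity, $(a,v)=\sum_{\lm\in\cF}(a,v_\lm)$. Because $a=a^2$, we have $a\in A_1(a)$. So for every $\lm\neq 1$, Lemma \ref{orthogonal} gives $(a,v_\lm)=0$, and only the term $(a,v_1)$ survives.

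Here primitivity is used: $A_1(a)$ is spanned by $a$, so $v_1=ca$ for some scalar $c\in\F$. Hence
$$
(a,v)=(a,v_1)=c\,(a,a)=0.
$$
Since $v$ was arbitrary, $a\in A^\perp$.

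There is no real obstacle. The only points needing care are that semisimplicity of $\ad_a$ is what allows the decomposition of $v$, and that $1\in\cF$ (assumed throughout) ensures the component $v_1$ is defined.
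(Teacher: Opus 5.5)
Your proof is correct and follows the paper's argument: the paper likewise observes that $a\in A_1(a)$ is orthogonal to $A_{\cF-\{1\}}(a)$ by Lemma \ref{orthogonal}, so that, with $A_1(a)$ spanned by $a$, whether $a\in A^\perp$ is decided by the single value $(a,a)$. You simply write out the component decomposition that the paper leaves implicit.
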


This, in turn, implies the desired claim.

\begin{lemma} \label{perp}
If the Frobenius form on a primitive axial algebra $A$ has the property that 
$(a,a)\neq 0$ for all $a\in X$ then the radical $R(A)$ of the algebra coincides with 
the radical of the form, $A^\perp$.
\end{lemma}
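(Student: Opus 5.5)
We prove the two inclusions separately. First, $R(A)\subseteq A^\perp$ always holds, which the argument below will use, and under the hypothesis $(a,a)\neq 0$ for all $a\in X$ we get the reverse inclusion $A^\perp\subseteq R(A)$.

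For $R(A)\subseteq A^\perp$, let $u\in R(A)$ and $a\in X$. By Lemma \ref{component}, the component $u_1$ of $u$ with respect to $a$ lies in $R(A)$. By primitivity, $u_1=\lm a$ for some $\lm\in\F$. If $\lm\neq0$, then $a\in R(A)$, contradicting the definition of the axial radical. Hence $u_1=0$, so $u\in A_{\cF-\{1\}}(a)$, and by Lemma \ref{orthogonal} we get $(u,a)=0$.

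So $R(A)$ is orthogonal to every axis in $X$. Next consider $U=\{v\in A\mid (R(A),v)=0\}$. This set contains $X$, and it is a subalgebra. Indeed, for $v,w\in U$ and $r\in R(A)$, the Frobenius property gives $(r,vw)=(rv,w)$. Here $rv\in R(A)$ because $R(A)$ is an ideal, so $(rv,w)=0$ since $w\in U$. Since $X$ generates $A$, we get $U=A$, and therefore $R(A)\subseteq A^\perp$.

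For $A^\perp\subseteq R(A)$, note that $A^\perp$ is an ideal, so it suffices to show that $A^\perp$ contains no axis $a\in X$. By Lemma \ref{axis}, $a\in A^\perp$ if and only if $(a,a)=0$. This fails by hypothesis, so no axis of $X$ lies in $A^\perp$. Since $R(A)$ is the largest ideal containing no axes from $X$, we conclude $A^\perp\subseteq R(A)$, and hence equality.

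The main point needing care is the subalgebra step. Orthogonality is only established directly against the generators in $X$, and extending it to all of $A$ uses associativity of the form together with the fact that $R(A)$ is an ideal. Products of generators are handled inductively by the identity $(r,vw)=(rv,w)$, so the step reduces to closure of $U$ under multiplication, which is what we checked above.
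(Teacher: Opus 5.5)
Your proof is correct and follows the paper's route: the paper gets $A^\perp\subseteq R(A)$ exactly as you do, from $A^\perp$ being an ideal together with Lemma~\ref{axis}, and it takes the inclusion $R(A)\subseteq A^\perp$ as already known (it is recalled in the introduction). Your self-contained proof of that inclusion is the same argument the paper uses later in Lemma~\ref{orthogonal to ideal} (vanishing $1$-components give orthogonality to the axes, then induction on products via $(r,vw)=(rv,w)$), specialised to $U=R(A)$ and $X_2=X$.
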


Finally, let us discuss the direct sum construction for axial algebras. Recall that, 
given algebras $A_j$, $j\in J$, we can define the direct sum algebra 
$$A=\oplus_{j\in J}A_j,$$
where $A_iA_j=0$ for all $i,j\in J$, $i\neq j$. 

First let us observe where primitive axes can lie in a direct product algebra.

\begin{lemma} \label{primitive}
If $a$ is a primitive axis in $A=\oplus_{j\in J}A_j$ then $a\in A_j$ for some 
$j\in J$.
\end{lemma}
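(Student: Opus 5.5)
We want to show that a primitive axis $a$ of $A=\oplus_{j\in J}A_j$ lies in a single summand. Write $a=\sum_j a_j$ with $a_j\in A_j$ (only finitely many non-zero). The strategy has two parts. First, use the direct sum structure to show that each non-zero component $a_j$ is itself an idempotent lying in $A_1(a)$. Then primitivity of $a$ will leave room for only one non-zero component.

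Since $A_iA_j=0$ for $i\neq j$, the square $a^2$ decomposes componentwise:
$$
a^2=\sum_j a_j^2 .
$$
Comparing components in $a^2=a$ gives $a_j^2=a_j$ for every $j$, so each $a_j$ is an idempotent of $A_j$, and hence of $A$. Next compute the adjoint action of $a$ on a component:
$$
a a_j=\sum_i a_i a_j=a_j^2=a_j .
$$
So every $a_j$ lies in the eigenspace $A_1(a)$.

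Now use primitivity. The space $A_1(a)$ is spanned by $a$, so each $a_j=c_ja$ for some scalar $c_j\in\F$. Suppose two indices $i\neq j$ have $a_i\neq 0$ and $a_j\neq 0$. Then $c_i,c_j\neq 0$, and
$$
a_i=c_ia=(c_i/c_j)\,a_j .
$$
The left-hand side lies in $A_i$ and the right-hand side in $A_j$. Since $A_i\cap A_j=0$, this forces $a_i=0$, a contradiction. Hence at most one component is non-zero. Because $a\neq 0$ (an axis is a non-zero idempotent), exactly one component $a_j$ is non-zero, and $a=a_j\in A_j$.

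I expect no step to be a real obstacle. The one point to make explicit is the directness of the sum, which is what makes the comparison of components legitimate.
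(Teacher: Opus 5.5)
Your proof is correct and follows essentially the paper's argument: decompose $a=\sum_j a_j$, observe that each $a_j$ lies in $A_1(a)$, and use primitivity to conclude that only one component is non-zero. You also spell out the details the paper leaves implicit, namely the componentwise comparison $a_j^2=a_j$ and the use of $A_i\cap A_j=0$.
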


Indeed, decompose $a=\sum_{j\in J} a_j$, with each $a_j\in A_j$, and note that each 
$a_j$ is contained in the $1$-eigenspace of $\ad_a$. Clearly, this means that only 
one $a_j$ is non-zero and so $a=a_j\in A_j$.

Suppose now that each $A_j$ is an axial algebra for the same fusion law 
$\cF$, that is, we have a generating set of axes $X_j$ in each 
$A_j$. Set 
$$X=\cup_{j\in J}X_j.$$
Comparing with Lemma \ref{primitive}, this is the only possible way to introduce on 
$A$ the structure of a primitive axial algebra. Hence, the natural question to ask 
is: when is the pair $(A,X)$ an axial algebra for the fusion law $\cF$?

\begin{lemma} \label{direct sum}
When $|J|\geq 2$, the pair $(A,X)$ is an axial algebra for the 
fusion law $\cF$ if and only if $0\in\cF$ and, additionally, $0\in 
0\ast 0$.
\end{lemma}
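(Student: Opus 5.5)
We prove the two implications separately, working with the direct sum $A=\oplus_{j\in J}A_j$, $|J|\geq 2$, and $X=\cup_{j\in J}X_j$. Since each $X_j$ generates $A_j$ and the $A_j$ together span $A$, the set $X$ generates $A$. Each $x\in X_j$ is a non-zero idempotent of $A$, because $x^2$ is computed inside $A_j$. Hence $(A,X)$ is an axial algebra for $\cF$ exactly when every $x\in X$ is an axis of $A$ for $\cF$. So the whole question reduces to understanding $\ad_x$ on $A$ for a fixed $x\in X_i$.

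Fix $x\in X_i$. Since $A_iA_k=0$ for $k\neq i$, the map $\ad_x$ acts on $A_i$ exactly as the adjoint of $x$ inside the axial algebra $A_i$, and it acts as zero on $B:=\oplus_{k\neq i}A_k$. Because $|J|\geq 2$, the summand $B$ is non-zero. Therefore
$$A_\lm(x)=(A_i)_\lm(x)\mbox{ for }\lm\neq 0,\qquad A_0(x)=(A_i)_0(x)\oplus B .$$

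\emph{Necessity.} Suppose $x$ is an axis of $A$. Then $B\subseteq A_0(x)$ and $B\neq 0$, so $0$ is an eigenvalue of $\ad_x$. Condition (a) then forces $0\in\cF$. Now take any $j\neq i$. The axes in $X_j$ generate $A_j\neq 0$, so $A_j$ is not a zero algebra: for instance, any $y\in X_j$ satisfies $y^2=y\neq 0$. Since $y\in A_0(x)$, we get $0\neq y=y\cdot y\in A_0(x)A_0(x)$. Condition (b) gives $A_0(x)A_0(x)\subseteq A_{0\ast 0}(x)$. Because $y\in A_0(x)$ and the eigenspace sum is direct, $y$ can lie in $A_{0\ast 0}(x)$ only if $0\in 0\ast 0$.

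\emph{Sufficiency.} Assume $0\in\cF$ and $0\in 0\ast 0$, and fix $x\in X_i$.
\begin{enumerate}
\item[(a)] From the eigenspace description above, $\ad_x$ is semisimple on $A$. Its eigenvalues are those of $x$ on $A_i$, which lie in $\cF$, together with possibly $0$, which also lies in $\cF$.
\item[(b)] Let $\lm,\mu\in\cF$. Write $u=u'+u''$ with $u'\in(A_i)_\lm(x)$ and $u''\in B$, where $u''=0$ unless $\lm=0$. Similarly write $v=v'+v''\in A_\mu(x)$. Then $uv=u'v'+u''v''$. The term $u'v'$ lies in $(A_i)_{\lm\ast\mu}(x)$ by the fusion law in $A_i$. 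The term $u''v''$ is non-zero only if $\lm=\mu=0$. In that case it lies in $B\subseteq A_0(x)\subseteq A_{0\ast 0}(x)$, using $0\in 0\ast 0$.
\end{enumerate}
Hence $uv\in A_{\lm\ast\mu}(x)$, so $x$ is an axis of $A$ for $\cF$.

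There is no real obstacle in this argument. The one point needing care is the necessity of $0\in 0\ast 0$: it requires a non-zero product inside $B$, and this is supplied by the fact that axes are non-zero idempotents.
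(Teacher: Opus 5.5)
The paper states this lemma without proof, so there is no authors' argument to compare with. Your proof is correct and is exactly the routine verification they leave implicit. The key step is the eigenspace description $A_\lm(x)=(A_i)_\lm(x)$ for $\lm\neq 0$ and $A_0(x)=(A_i)_0(x)\oplus B$, and both directions follow from it as you show.

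Two small remarks:
\begin{enumerate}
\item[(1)] You tacitly assume every $X_j$ is non-empty, so that $A_j\neq 0$, $B\neq 0$, and the element $y$ exists. The paper assumes this silently too, and without it the ``only if'' direction fails: take $A_2=0$ with $X_2=\emptyset$.
\item[(2)] The paper's axial algebras are primitive by default. Your eigenspace description gives $A_1(x)=(A_i)_1(x)$, so it is worth stating that primitivity of the axes is inherited by $(A,X)$.
\end{enumerate}
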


Coincidentally, the same condition, $0\in\cF$ and $0\in 0\ast 0$, is required for 
$A\not\cong\F$ to be unital, i.e., to have a multiplicative identity element.

\section{Blocks} \label{blocks}

In \cite{kms}, sum decompositions of primitive axial algebras, slightly generalizing 
direct sum decompositions, were investigated. In this note we take a more general 
viewpoint. 

\begin{definition} \label{block}
For an axial algebra $(A,X)$ and an axis $a\in X$, the \emph{block} $I_a$ of $A$ 
corresponding to $a$ is defined as the ideal generated by $a$, i.e., the smallest 
ideal of $A$ containing $a$.
\end{definition}

As the sum of ideals is again an ideal, we have the following observation.

\begin{lemma} \label{block decomposition}
We have that $A=\sum_{a\in X}I_a$.
\end{lemma}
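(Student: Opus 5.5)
The claim is that $A=\sum_{a\in X}I_a$. Set $S=\sum_{a\in X}I_a$. The plan is to show that $S$ is an ideal of $A$ containing every generating axis, and then use that $X$ generates $A$ to conclude $S=A$.

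First, $S$ is an ideal. Each block $I_a$ is an ideal by Definition \ref{block}. A sum of ideals is again an ideal: it is a subspace consisting of finite sums $u=\sum u_a$ with $u_a\in I_a$, and for any $v\in A$ we have $vu=\sum vu_a\in S$, because $vu_a\in I_a$. Commutativity of $A$ means we only need to check multiplication on one side.

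Second, $X\subseteq S$, since each $a\in X$ lies in its own block $I_a\subseteq S$.

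Third, an ideal is in particular a subalgebra, since it is closed under multiplication by all of $A$ and hence by its own elements. So $S$ is a subalgebra of $A$ containing the generating set $X$. The subalgebra generated by $X$ is the smallest subalgebra containing $X$, and it equals $A$ because $(A,X)$ is an axial algebra. Therefore $A\subseteq S$, and so $S=A$.

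None of these steps is a real obstacle. The only point needing care is the passage from "ideal" to "subalgebra" in the third step, which lets us compare $S$ with the subalgebra generated by $X$.
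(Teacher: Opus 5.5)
Your proof is correct and follows the same route as the paper: $\sum_{a\in X}I_a$ is an ideal, hence a subalgebra, and it contains $X$, so it equals $A$ because $X$ generates $A$. You simply spell out the details that the paper leaves as ``clearly.''
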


Clearly, the right side is an ideal, and hence a subalgebra containing all of $X$, so 
it must coincide with $A$, as $X$ generates $A$.

\begin{definition}
We call an axial algebra \emph{decomposable} if it is the sum of its proper ideals.
\end{definition}

An indecomposable algebra has a unique maximal ideal, the sum of all its proper 
ideals.

In view of Lemma \ref{block decomposition}, if an axial algebra $A$ is indecomposable 
then $A=I_a$ for some $a\in X$. We will now show that in the primitive case the 
converse is also true. First, we broadly generalize Lemma \ref{primitive}.

\begin{lemma} \label{sum}
Suppose that an algebra $A$ is decomposed as a sum of its ideals:
$$
A=\sum_{j\in J}I_j.
$$
If $a\in A$ is a primitive axis then $a\in I_j$ for some $j\in J$.
\end{lemma}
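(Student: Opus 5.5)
Write $a=\sum_{j\in J_0}u_j$ with $u_j\in I_j$ and $J_0\subseteq J$ finite. This is possible because $A=\sum_{j\in J}I_j$, and every element of a sum of subspaces is a finite sum of elements from those subspaces. The plan is to project this decomposition onto the $1$-eigenspace of $\ad_a$ and use primitivity.

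\textbf{Step 1: decompose each $u_j$ into components.} With respect to the axis $a$, write $u_j=\sum_{\lm\in\cF}(u_j)_\lm$. By Lemma \ref{component}, each component $(u_j)_\lm$ lies in $I_j$, since $I_j$ is an ideal.

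\textbf{Step 2: compare $1$-components.} Since $a\in A_1(a)$ and the decomposition $A=\oplus_{\lm\in\cF}A_\lm(a)$ is direct, taking the $1$-component of both sides gives
$$a=\sum_{j\in J_0}(u_j)_1 .$$

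\textbf{Step 3: use primitivity.} Primitivity says $A_1(a)=\F a$, so $(u_j)_1=c_j a$ for some scalars $c_j\in\F$. Then $a=\bigl(\sum_j c_j\bigr)a$, and since $a\neq 0$ we get $\sum_j c_j=1$. Hence some $c_j\neq 0$. For that $j$,
$$a=c_j^{-1}(u_j)_1\in I_j,$$
as $I_j$ is a subspace containing $(u_j)_1$ by Step 1.

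There is no real obstacle here. The only delicate point is making sure Lemma \ref{component} applies: it needs the ideal $I_j$ to be invariant under $\ad_a$ and $\ad_a$ to be semisimple, and both are given, since $I_j$ is an ideal and $a$ is an axis. The rest is linear algebra together with the one-dimensionality of $A_1(a)$.
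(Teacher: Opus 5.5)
Your proof is correct and is essentially the paper's argument. Both rest on Lemma~\ref{component} together with $A_1(a)=\F a$. The paper argues by contradiction that $a\notin I_j$ forces $I_j\subseteq A_{\cF-\{1\}}(a)$ for every $j$, which would give $A\subseteq A_{\cF-\{1\}}(a)$, an absurdity. You run the same reasoning directly, taking $1$-components of $a=\sum_j u_j$ and choosing a $j$ with $c_j\neq 0$.
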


\begin{proof}
{\it Ad absurdum}, suppose that $a\notin I_j$ for all $j\in J$. Take some $j\in J$ 
and $u\in I_j$. Note that by the primitivity of $a$, the component $u_1$, taken with 
respect to $a$, is a multiple of $a$. In view of Lemma \ref{component}, $u_1=0$ since 
$a\notin I_j$. It follows that $u\in A_{\cF-\{1\}}(a)$, and so $I_j\subseteq A_{\cF-
\{1\}}(a)$. Since this is true for all $j\in J$, we conclude that all $I_j$ are 
contained in $A_{\cF-\{1\}}(a)$, and so $A=\sum_{j\in J}I_j\subseteq 
A_{\cF-\{1\}}(a)$; clearly a contradiction. 
\end{proof}

Consequently, we have the following.

\begin{corollary}
If $A=I_a$ for some $a\in X$ then $A$ is indecomposable.
\end{corollary}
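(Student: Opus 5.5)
We argue by contradiction and use Lemma \ref{sum}. Suppose $A=I_a$ for some $a\in X$, but $A$ is decomposable. By definition, $A$ is then the sum of its proper ideals:
$$
A=\sum_{j\in J}I_j,
$$
where each $I_j$ is a proper ideal of $A$.

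Since $(A,X)$ is a primitive axial algebra, the axis $a$ is primitive. Lemma \ref{sum} therefore applies to this decomposition and yields an index $j\in J$ with $a\in I_j$.

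Now $I_a$ is the smallest ideal of $A$ containing $a$ (Definition \ref{block}), and $I_j$ is an ideal containing $a$. Hence $I_a\subseteq I_j$, and so
$$
A=I_a\subseteq I_j\subseteq A.
$$
Thus $I_j=A$, contradicting the assumption that $I_j$ is proper. So $A$ is indecomposable.

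There is essentially no obstacle here; all the work was done in Lemma \ref{sum}. The only point to check is that Lemma \ref{sum} is stated for an arbitrary, possibly infinite, family of ideals, so it covers the sum of all proper ideals. It does, since its proof only uses that $A$ is the sum of the $I_j$. If $A$ has no proper ideals at all, the empty sum is $0\neq A$, because $A$ contains the nonzero idempotent $a$; so $A$ cannot be decomposable in that case either, and the statement holds vacuously.
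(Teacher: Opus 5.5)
Your proof is correct and follows the paper's intended argument. The paper states the corollary as an immediate consequence of Lemma \ref{sum}: that lemma places the primitive axis $a$ in some proper ideal $I_j$, which forces $A=I_a\subseteq I_j$, a contradiction.
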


As a word of warning, we cannot claim that each block $I_a$ is itself an axial 
algebra. Indeed, the axes from $X\cap I_a$ may in principle generate a proper 
subalgebra of $I_a$. We also cannot claim that every block is indecomposable, as we cannot assume that every ideal of a block is an ideal of the whole algebra $A$.

Can distinct blocks be contained in one another? This kind of questions can be 
formulated and investigated using the natural domination relation on the set of axes 
$X$. 

\begin{definition}
For axes $a,b\in X$, $a$ \emph{dominates} $b$ if $I_b\subseteq I_a$. Two axes $a$ and 
$b$ are \emph{equivalent} if they dominate each other, that is, if $I_a=I_b$.
\end{definition}

Hence, the above question amounts to asking: can dominance be non-symmetric; i.e., 
can it not be an equivalence relation?

The concept of the projection graph was introduced in \cite{kms}.

\begin{definition}
The \emph{projection digraph} of a primitive axial algebra $(A,X)$ has $X$ as its set 
of vertices. For $a,b\in X$, $a\neq b$, there is an arc from $a$ to $b$ if and only 
if the projection of $a$ to $A_1(b)$ is non-zero. That is, $a_1\neq 0$, where $a_1$ 
is the $1$-component of $a$ with respect to $b$. 
\end{definition}

As $a_1$ is a multiple of $b$, since $b$ is primitive, we have that $b\in I_a$, that 
is $a$ dominates $b$. As dominance is transitive, we have the following.

\begin{lemma}
The axis $a$ dominates all axes $b$ that can be reached from $a$ via oriented paths 
in the projection graph.
\end{lemma}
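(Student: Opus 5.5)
We need to prove: if $b$ is reachable from $a$ via an oriented path in the projection digraph, then $a$ dominates $b$, i.e.\ $I_b\subseteq I_a$.

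The argument is an induction on the length of the oriented path, using two facts: every arc gives a dominance, and dominance is transitive.

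First we establish the single-arc case. Suppose there is an arc from $a$ to $b$. Decompose $a$ with respect to $b$ as $a=\sum_{\lm\in\cF}a_\lm$. Since $a\in I_a$ and $I_a$ is an ideal, Lemma \ref{component} applied to the axis $b$ gives $a_1\in I_a$. By primitivity of $b$, $a_1=\gamma b$ for some $\gamma\in\F$, and $\gamma\neq 0$ because the arc means $a_1\neq 0$. Hence $b=\gamma^{-1}a_1\in I_a$. Since $I_b$ is the smallest ideal containing $b$, we get $I_b\subseteq I_a$, so $a$ dominates $b$.

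Next, dominance is transitive: if $I_c\subseteq I_b$ and $I_b\subseteq I_a$, then $I_c\subseteq I_a$. Given an oriented path $a=x_0\to x_1\to\cdots\to x_k=b$, the single-arc case yields
$$
I_{x_{i+1}}\subseteq I_{x_i}\quad\mbox{for each } i.
$$
Chaining these inclusions gives $I_b\subseteq I_a$. The path of length $0$ ($b=a$) is trivial.

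There is no real obstacle here. The only point needing care is the single-arc step, which requires Lemma \ref{component} together with primitivity of $b$, so that the nonzero $1$-component is a nonzero scalar multiple of $b$.
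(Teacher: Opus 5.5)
Your proof is correct and follows the paper's own reasoning: the single-arc step (the $1$-component of $a$ with respect to $b$ lies in $I_a$ and is a nonzero multiple of $b$ by primitivity, so $b\in I_a$ and $I_b\subseteq I_a$), then transitivity of dominance along the path. You also cite Lemma~\ref{component} explicitly for $a_1\in I_a$, a step the paper leaves implicit.
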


When two axes are connected by arcs in both directions, we will say that they are 
connected by an undirected \emph{edge}. Such two axes are clearly equivalent, and 
so we also have the following.

\begin{lemma}
If two vertices $a$ and $b$ are connected in the projection graph via a path 
consisting entirely of undirected edges then $a$ and $b$ are equivalent. 
\end{lemma}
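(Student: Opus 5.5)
The statement to prove is the last lemma: if $a$ and $b$ are joined in the projection graph by a path consisting entirely of undirected edges, then $a$ and $b$ are equivalent. I plan to induct on the length of such a path, using the single-edge case as the base and transitivity of domination for the inductive step.

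\emph{Single edge.} Suppose $a$ and $b$ are joined by an undirected edge, so there are arcs $a\to b$ and $b\to a$. The arc $a\to b$ means the $1$-component $a_1$ of $a$ with respect to $b$ is non-zero. By primitivity of $b$, $a_1=\lambda b$ for some $\lambda\neq 0$. Since $a\in I_a$, Lemma~\ref{component} gives $a_1\in I_a$, hence $b\in I_a$ and therefore $I_b\subseteq I_a$. Symmetrically, the arc $b\to a$ gives $I_a\subseteq I_b$, so $I_a=I_b$.

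\emph{Paths.} Now let $a=c_0,c_1,\dots,c_n=b$ be a path whose consecutive vertices are joined by undirected edges. By the single-edge case, $I_{c_{i-1}}=I_{c_i}$ for each $i$. Chaining these equalities, which is just transitivity of equality of ideals, gives $I_a=I_b$. So $a$ and $b$ are equivalent.

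No step is a genuine obstacle. The only point needing care is the justification that $a_1\in I_a$, which is exactly what Lemma~\ref{component} supplies, together with the fact that $a_1$ is a non-zero multiple of $b$ by primitivity.
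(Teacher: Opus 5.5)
Your proof is correct and follows the paper's own argument. An arc $a\to b$ puts $b\in I_a$ via Lemma~\ref{component} and the primitivity of $b$, so an undirected edge gives mutual domination, and equivalence then propagates along the path by transitivity; the only difference is that you spell out the appeal to Lemma~\ref{component}, which the paper leaves implicit.
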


Finally, let us mention what happens in the case where the Frobenius from has the 
property that all axes $a\in X$ are non-singular, i.e., $(a,a)\neq 0$ for all $a\in 
X$. In this case, an arc from $a$ to $b$ exists if and only if $(a,b)\neq 0$. That 
is, the projection digraph turns into a \emph{non-orthogonality} graph, as all arcs 
are parts of undirected edges. Hence all axes within the same connected component of 
the non-orthogonality graph are equivalent. Note that this still leaves the 
possibility of non-symmetric domination between axes from different components.

\section{Jacobson radical}

We start by repeating the definition already stated in the introduction.

\begin{definition}
The \emph{Jacobson radical} $J(A)$ of an axial algebra $A$ is the intersection of all 
maximal ideals of $A$. 
\end{definition}

If $A$ has no maximal ideals then we simply take $J(A)=A$.

\medskip
There seems to be no general consensus in the literature how to define the Jacobson 
radical for a general non-associative algebra. For the class of Jordan algebras, 
closely related to axial algebras,  McCrimmon \cite{McC} defined the quasi-regular 
Jacobson radical.  Zelmanov introduced the concept of a primitive unital Jordan 
algebra and showed that the Jacobson radical is the intersection of primitive 
ideals (cores of maximal quadratic ideals)  \cite{z}. Hogben and McCrimmon 
\cite{HMcC} extended this characterization of the Jacobson radical to arbitrary 
Jordan algebras. We wonder whether these definitions would simplify under the 
axial assumption and be closer to what we consider in this paper.

\medskip
In the remainder of this section, we prove half of Theorem \ref{main}.

\begin{proposition}
The axial radical $R(A)$ of a primitive axial algebra $A$ is contained in 
the Jacobson radical $J(A)$. 
\end{proposition}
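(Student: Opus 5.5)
We must show that $R(A)\subseteq M$ for every maximal ideal $M$ of $A$; if there are no maximal ideals then $J(A)=A$ and there is nothing to prove. So fix a maximal ideal $M$ and suppose, for a contradiction, that $R(A)\not\subseteq M$.

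Since $M$ and $R(A)$ are ideals, so is $M+R(A)$. It strictly contains $M$, so maximality of $M$ gives
$$
A=M+R(A).
$$
This expresses $A$ as a sum of two of its ideals, which is exactly the setting of Lemma~\ref{sum}. Apply that lemma to an arbitrary axis $a\in X$, which is primitive. It gives $a\in M$ or $a\in R(A)$. By the definition of the axial radical, no axis of $X$ lies in $R(A)$. Hence $a\in M$ for every $a\in X$.

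Now $M$ is an ideal, hence a subalgebra, and it contains the generating set $X$. Therefore $M=A$, contradicting the fact that a maximal ideal is proper. So $R(A)\subseteq M$ for every maximal ideal $M$, and therefore $R(A)\subseteq J(A)$.

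The only point needing care is the appeal to Lemma~\ref{sum}. That lemma uses primitivity through Lemma~\ref{component}: the $1$-component of an element of an ideal with respect to $a$ is a multiple of $a$ and again lies in the ideal. This applies to any decomposition of $A$ as a sum of ideals, in particular $A=M+R(A)$. I expect no further obstacle beyond noting that maximal ideals are taken to be proper.
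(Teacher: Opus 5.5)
Your proof is correct and matches the paper's argument: you take a maximal ideal not containing $R(A)$, get $A=M+R(A)$, apply Lemma~\ref{sum} to each axis, rule out $R(A)$, and conclude $X\subseteq M$, which forces $M=A$. You also explicitly handle the case with no maximal ideals, where $J(A)=A$; the paper states this convention in its definition but does not invoke it in the proof.
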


\begin{proof}
By contradiction, suppose that the axial radical $R=R(A)$ of $A$ is not contained in 
$J(A)$. Then $A$ has a maximal ideal $I$ not containing $R$. Note that $R+I=A$, since 
$I$ is a maximal ideal. According to Lemma \ref{sum}, every $a\in X$ is contained in 
$R$ or in $I$. However, none of them can be in $R$ by the definition of the axial 
radical. Hence $X\subseteq I$ and so also $A\subseteq I$, as $X$ generates $A$; a 
contradiction.
\end{proof}

\section{Ideal complement} \label{complement}

We now start proving the other half of Theorem \ref{main}, concerning the Frobenius 
form radical $A^\perp$. In this section, we assume that the Frobenius from 
$(\cdot,\cdot)$ on a primitive axial algebra $A$ is non-degenerate, that is, 
$A^\perp=0$. By Lemma \ref{axis}, $(a,a)\neq 0$ for all $a\in X$, and this, in turn, 
implies, according to Lemma \ref{perp}, that the axial radical of $A$ coincides with 
$A^\perp$, and so it is also trivial. 

Suppose $U\neq 0$ is an ideal of $A$. Then $U$ must contain some axes from $X$, as 
$R(A)$ is trivial. Define $X_1=X\cap U$ and $X_2=X\setminus X_1$.

Recall from Lemma \ref{component} that, for $a\in X$ and $u\in U$, every component of 
$u$ with respect to $a$ is also contained in $U$. In particular, if $a\in X_2$ then 
the component $u_1$ is zero, as it must be a multiple of $a$. Now it follows from 
Lemma \ref{orthogonal} that $(a,u)=0$ for all $a\in X_2$ and $u\in U$.

Consider the magma $M_i$ of all products of the axes from $X_i$, $i=1,2$. 

\begin{lemma} \label{orthogonal to ideal}
For all $m\in M_2$, we have that $(U,m)=0$.
\end{lemma}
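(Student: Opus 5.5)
We argue by induction on the length of $m$ as a product of axes from $X_2$. Here the length of $m$ means the number of axes from $X_2$ in a fixed bracketing that expresses $m$. The base case, length $1$, means $m=a\in X_2$, and it has already been established above. For $a\in X_2$ and $u\in U$, the component $u_1$ with respect to $a$ lies in $U$ by Lemma~\ref{component} and is a multiple of $a$. Since $a\notin U$, this forces $u_1=0$. Lemma~\ref{orthogonal} then gives $(a,u)=(a,u_1)=0$.

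For the inductive step, suppose $m=m'm''$ with $m',m''\in M_2$ of smaller length, and let $u\in U$. By the Frobenius property,
$$
(u,m)=(u,m'm'')=(um',m'').
$$
Since $U$ is an ideal, $um'\in U$. The induction hypothesis applied to $m''$ and to the element $um'\in U$ then gives $(um',m'')=0$. Hence $(u,m)=0$ for all $u\in U$, which is exactly $(U,m)=0$.

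The only point that needs care is that the induction hypothesis is the full statement $(U,m'')=0$, quantified over all of $U$, and not orthogonality to a single fixed $u$. It is this uniformity that lets us absorb the factor $m'$ into the ideal via $um'\in U$. Nothing beyond Lemmas~\ref{component} and~\ref{orthogonal} and the associativity of the form is needed; in particular, the symmetry of the form is not used here.
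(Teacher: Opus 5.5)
Your proof is correct and is essentially the paper's argument: the paper phrases it as a minimal counterexample rather than an induction, but uses the same base case ($u_1=0$ for $a\in X_2$, hence $(a,u)=0$) and the same step $(U,m'm'')=(Um',m'')=0$ via $Um'\subseteq U$. One cosmetic point: induct on the minimal number of factors (or over all expressions) rather than on a single fixed bracketing per element, so that the shorter expressions for $m'$ and $m''$ are covered by the hypothesis.
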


\begin{proof}
By contradiction, suppose that we have $m\in M_2$ that are not orthogonal to $U$. 
Select such a product $m$ with the smallest possible number of factors. We have seen 
that all $a\in X_2$ are orthogonal to $U$, so $m$ cannot be just an axis. Hence 
$m=m'm''$ for some $m',m''\in M_2$. Therefore, $(U,m'm'')=(Um',m'')=0$ since 
$Um'\subseteq U$ and $m''\in M_2$ has fewer factors compared to $m$. The 
contradiction proves the claim.
\end{proof}

\begin{lemma} \label{product zero}
For all $m_1\in M_1$ and $m_2\in M_2$, we have that $m_1m_2=0$.
\end{lemma}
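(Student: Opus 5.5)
Since the form is non-degenerate ($A^\perp=0$), it is enough to show that $(m_1m_2,v)=0$ for every $v\in A$. The algebra $A$ is spanned by the magma $M$ of all products of axes from $X=X_1\cup X_2$, so it suffices to take $v\in M$. The plan has three steps.

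\emph{Step 1: $M_1\subseteq U$.} Every axis in $X_1$ lies in $U$, and $U$ is closed under multiplication, so every product of axes from $X_1$ lies in $U$. Consequently, for any $w\in A$ we get $m_1w\in U$.

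\emph{Step 2: reduce to Lemma \ref{orthogonal to ideal}.} By associativity of the Frobenius form and commutativity of $A$,
$$
(m_1m_2,v)=(m_2m_1,v)=(m_2,m_1v).
$$
By Step 1, $m_1v\in U$. By Lemma \ref{orthogonal to ideal}, $(U,m_2)=0$, and the form is symmetric, so $(m_2,m_1v)=0$. Note that this holds for every $v\in A$, so the reduction to $v\in M$ is not even needed.

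\emph{Step 3: conclude.} We now have $m_1m_2\in A^\perp=0$, so $m_1m_2=0$.

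The only point that needs care is the regrouping in Step 2: the factor $m_1$ must be moved so that it multiplies $v$, since this is what places the resulting element in the ideal $U$. Once that is done, Lemma \ref{orthogonal to ideal} applies directly, and I expect no real obstacle.
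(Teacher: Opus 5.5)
Your proof is correct and is essentially the paper's argument: the paper writes $(A,m_1m_2)=(Am_1,m_2)=0$ using $M_1\subseteq U$ (so $Am_1\subseteq U$) and Lemma~\ref{orthogonal to ideal}, then concludes by non-degeneracy of the form, exactly as in your Step 2. As you noted yourself, the preliminary reduction to $v\in M$ is unnecessary.
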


\begin{proof}
Indeed, note that $M_1\subseteq U$. Therefore, $(A,m_1m_2)=(Am_1,m_2)=0$ by Lemma 
\ref{orthogonal to ideal} since $Am_1\subseteq U$. Since the Frobenius form is 
non-degenerate, we conclude that $m_1m_2=0$.
\end{proof}

Let $A_i$ be the subalgebra generated by $X_i$, $i=1,2$. Then $A_i$ is spanned by 
$M_i$. 

\begin{proposition} \label{decomposition}
We have that $A=A_1+A_2=A_1\oplus A_2$. Furthermore, $U=A_1$.
\end{proposition}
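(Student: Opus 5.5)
The plan is to establish, in turn, that $A = A_1 + A_2$, that $A_1A_2 = 0$, that $A_1 \cap A_2 = 0$, and finally that $U = A_1$.

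\emph{Step 1: $A = A_1 + A_2$.} Every product of axes from $X$ that involves axes from both $X_1$ and $X_2$ vanishes. To see this, induct on the number of factors. A mixed product $m = m'm''$ either has both $m', m''$ unmixed but from different $X_i$, in which case $m = 0$ by Lemma~\ref{product zero}, or it has a mixed factor, which is $0$ by induction. Since $A$ is spanned by all products of axes from $X = X_1 \cup X_2$, we get that $A$ is spanned by $M_1 \cup M_2$, so $A = A_1 + A_2$.

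\emph{Step 2: $A_1A_2 = 0$.} This follows by bilinearity from Lemma~\ref{product zero}.

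\emph{Step 3: the sum is direct.} Take $w \in A_1 \cap A_2$. Since $w \in A_2$, it is a combination of elements of $M_2$, so $(U, w) = 0$ by Lemma~\ref{orthogonal to ideal}. Also $M_1 \subseteq U$, since $X_1 \subseteq U$ and $U$ is an ideal, hence $A_1 \subseteq U$. It follows that $(A_1, w) = 0$. Furthermore $(A_2, w) = 0$, because $w \in A_1 \subseteq U$ and $A_2$ is orthogonal to $U$ by Lemma~\ref{orthogonal to ideal}. Hence $(A, w) = 0$, and non-degeneracy gives $w = 0$. So $A = A_1 \oplus A_2$, and by Step 2 this is a direct sum of algebras.

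\emph{Step 4: $U = A_1$.} We already have $A_1 \subseteq U$. Conversely, take $u \in U$ and write $u = u_1 + u_2$ with $u_i \in A_i$. Then $u_2 = u - u_1 \in U$, since $A_1 \subseteq U$. But $u_2 \in A_2$, and the argument of Step 3 shows that any element of $U \cap A_2$ is orthogonal to both $A_1$ (which lies in $U$, and Lemma~\ref{orthogonal to ideal} applies to $M_2$-combinations) and $A_2$ (again by Lemma~\ref{orthogonal to ideal}). Hence $u_2 \in A^\perp = 0$, and so $U = A_1$.

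The only mildly delicate point is the induction in Step 1 showing that mixed products vanish; everything else is a direct application of Lemmas~\ref{orthogonal to ideal} and~\ref{product zero} together with non-degeneracy.
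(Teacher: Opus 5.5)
Your proof is correct. Steps 1 and 2 agree in substance with the paper, which simply notes that $A_1A_2=0$ makes $A_1+A_2$ a subalgebra containing $X$; your induction on monomials is that observation unwound.

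The real difference is in Steps 3 and 4. You use non-degeneracy of the form a second time and prove directly that $U\cap A_2\subseteq A^\perp$. An element of $A_2$ is orthogonal to $U\supseteq A_1$, and an element of $U$ is orthogonal to $A_2$, both by Lemma~\ref{orthogonal to ideal}.

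The paper instead argues through the axial radical. From $A=A_1+A_2$ and $A_1A_2=0$ it deduces that $A_1$ and $A_2$ are ideals. Hence $A_1\cap A_2$ is an ideal with zero square, so it contains no non-zero idempotents and in particular no axes; it therefore lies in $R(A)=0$. Similarly, $U\cap A_2$ is an ideal containing no axes of $X_1$ (by directness) and none of $X_2$ (as $X_2\cap U=\emptyset$), so it too lies in $R(A)=0$.

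Your version is more self-contained. It needs neither that the $A_i$ are ideals nor the equality $R(A)=A^\perp=0$, which the paper gets from Lemmas~\ref{axis} and~\ref{perp}. It also settles both claims with one observation: since $A_1\cap A_2\subseteq U\cap A_2$, your Step 3 is a special case of Step 4. The paper's version shows that, once $A_1A_2=0$ is known, the remaining conclusions depend only on the axial radical being trivial, which fits its theme of comparing radicals.
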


\begin{proof}
Clearly, for $u_1\in A_1$ and $u_2\in A_2$, we have that $u_1u_2=0$. This implies 
that $A_1+A_2$ is closed for products and so it is a subalgebra of $A$. Since 
$A_1+A_2$ contains $X=X_1\cup X_2$, which generates $A$, it follows that $A=A_1+A_2$. 
In particular, both $A_1$ and $A_2$ are ideals of $A$. This means that $I=A_1\cap 
A_2$ is also an ideal of $A$. Now note that $I^2=II\subseteq A_1A_2=0$, that is, $I$ 
is a nilpotent ideal. Consequently, $I$ contains no non-zero idempotents, and in 
particular, it does not contain any axes from $X$. Therefore, $I\subseteq R(A)=0$. We 
have shown that $A_1\cap A_2=0$, that is, $A=A_1\oplus A_2$.

Finally, if $U$ is bigger than $A_1$ then consider $J=U\cap A_2\neq 0$. We again have 
that $J$ is an ideal and, furthermore, $J$ contains no axes from $X_1$ since 
$J\subseteq A_2$. Also, $J$ contains no axes from $X_2$ since $J\subseteq U$ and 
$X_2$ is disjoint from $U$. Again, this means that $J$ is disjoint from the entire 
set of axes $X$, that is, $J$ is contained in $R(A)$, which is trivial. The 
contradiction proves that $U=A_1$, as claimed.
\end{proof}

\section{Semisimplicity} \label{semisimplicity}

We continue assuming that $A$ is a primitive axial algebra with a non-degenerate 
Frobenius from.

Recall that, for $a\in X$, the block $I_a$ is the ideal generated by $a$. 

\begin{lemma} \label{equal}
If $b\in X$ and $b\in I_a$ then $I_b=I_a$.
\end{lemma}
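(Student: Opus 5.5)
Since $b\in I_a$ and $I_a$ is an ideal, we immediately get $I_b\subseteq I_a$, so the task is the reverse inclusion $a\in I_b$. The plan is to apply Proposition \ref{decomposition} twice, first with $U=I_a$ and then with $U=I_b$. In both cases the Frobenius form is non-degenerate, as assumed throughout this section.

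\textbf{Step 1: $U=I_b$.} Since $b\in I_b$, the ideal is non-zero. Set $X_1=X\cap I_b$ and $X_2=X\setminus X_1$. Proposition \ref{decomposition} then gives $A=A_1\oplus A_2$ with $I_b=A_1$, where $A_i$ is the subalgebra generated by $X_i$. Suppose, for a contradiction, that $a\notin I_b$. Then $a\in X_2$, so $a\in A_2$. Because $A=A_1\oplus A_2$ has $A_1A_2=0$, the subspace $A_2$ is an ideal of $A$. Hence $I_a\subseteq A_2$, and therefore $b\in I_a\subseteq A_2$. But also $b\in X_1\subseteq A_1$. This puts $b\in A_1\cap A_2=0$, which is impossible because $b$ is a non-zero idempotent. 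So $a\in I_b$, and $I_a\subseteq I_b$.

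\textbf{The key point.} The step that needs care is that $A_2$ is an ideal containing all axes of $X_2$. This follows from $A=A_1+A_2$ together with $A_1A_2=0$, which were established in the proof of Proposition \ref{decomposition}: we have $AA_2=(A_1+A_2)A_2=A_2A_2\subseteq A_2$. Since $I_a$ is the smallest ideal containing $a$, this gives $I_a\subseteq A_2$ whenever $a\in X_2$.

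Combining the two inclusions yields $I_a=I_b$.
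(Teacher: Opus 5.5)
Your proof is correct and is essentially the paper's argument: take $U=I_b$ in Proposition~\ref{decomposition}, note that $a\notin I_b$ forces $a\in X_2$ and hence $I_a\subseteq A_2$, and derive the contradiction $b\in A_1\cap A_2=0$. The only quibble is your announced plan to apply the proposition ``twice, first with $U=I_a$'': that step never appears and is not needed, because $I_b\subseteq I_a$ follows directly from $b\in I_a$, as you yourself note at the start.
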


\begin{proof}
Clearly, $I_b\subseteq I_a$. By contradiction, suppose that $I_b\neq I_a$. Then 
$a\notin I_b$. We set $U=I_b$ and define $X_i$, $M_i$, and $A_i$, $i=1,2$, as in 
Section \ref{complement}. Then $a\in X_2\subset A_2$ and hence also $I_a\subset A_2$. 
However, this means (see Proposition \ref{decomposition}) that $b\in A_1\cap A_2=0$, 
which is a contradiction. 
\end{proof}

This means that the domination relation, as introduced in section \ref{blocks}, 
is symmetric in our algebra $A$, and hence dominance in $A$ is an equivalence 
relation. Let $C$ be the set of equivalence classes. For $c\in C$, we define $I_c$ 
as $I_a$ for an arbitrary axis $a\in c$.

\begin{proposition} \label{semisimple}
For $c,c'\in C$, $c\neq c'$, we have that $I_cI_{c'}=0$. Furthermore, each $(I_c,c)$ 
is a simple primitive axial algebra and 
$$
A=\oplus_{c\in C}I_c.
$$
\end{proposition}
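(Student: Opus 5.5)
The plan is to apply Proposition~\ref{decomposition} with $U=I_c$ for each class $c\in C$, and then assemble the pieces.

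\textbf{Step 1: the splitting for a fixed class.} Fix $c\in C$ and set $U=I_c$. Then $X_1=X\cap I_c$. By Lemma~\ref{equal}, an axis $b\in X$ lies in $I_c$ if and only if $I_b=I_c$, that is, if and only if $b\in c$. Hence $X_1=c$. Proposition~\ref{decomposition} then gives
$$A=I_c\oplus A_2^{(c)},$$
where $I_c$ is the subalgebra generated by $c$, and $A_2^{(c)}$ is the subalgebra generated by $X\setminus c$, with $I_cA_2^{(c)}=0$.

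\textbf{Step 2: orthogonality and directness.} For $c'\neq c$ we have $c'\subseteq X\setminus c$, so $I_{c'}$, being the subalgebra generated by $c'$, lies in $A_2^{(c)}$. Therefore $I_cI_{c'}=0$.

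\textbf{Step 3: the sum is all of $A$.} By Lemma~\ref{block decomposition}, $A=\sum_{c\in C}I_c$.

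\textbf{Step 4: the sum is direct.} Since $I_c\cap A_2^{(c)}=0$, it suffices to show $\sum_{c'\neq c}I_{c'}\subseteq A_2^{(c)}$. This holds because each such $I_{c'}$ lies in $A_2^{(c)}$ by Step 2. Hence $A=\oplus_{c\in C}I_c$.

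\textbf{Step 5: each block is a primitive axial algebra.} By Step 1, $I_c$ is generated by $c$. For $a\in c$, the $\ad_a$-eigenspace decomposition of $A$ restricts to $I_c$ by Lemma~\ref{component}. So $a$ is an axis of $I_c$ for $\cF$, and it is primitive because $(I_c)_1(a)\subseteq A_1(a)=\F a$. Thus $(I_c,c)$ is a primitive axial algebra.

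\textbf{Step 6: simplicity of $I_c$.} This is the main obstacle. Because of the direct sum decomposition, any ideal $V$ of $I_c$ is an ideal of $A$: indeed $VA=VI_c+\sum_{c'\neq c}VI_{c'}=VI_c\subseteq V$. Suppose $0\neq V\subsetneq I_c$.
\begin{itemize}
\item Since $V\neq 0$ is an ideal of $A$ and $R(A)=0$, the ideal $V$ contains some axis $b\in X$.
\item This $b$ lies in $I_c$, so $b\in c$, and therefore $I_c=I_b\subseteq V$, a contradiction.
\end{itemize}
Here $R(A)=0$ follows from the non-degeneracy of the form together with Lemmas~\ref{axis} and~\ref{perp}.

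If simplicity is also required to exclude a zero product, note that $I_c$ contains an idempotent, so $I_c^2\neq 0$.

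Alternatively, one can avoid the word ``simple'' in the strict sense, or note that a Frobenius form restricted to $I_c$ stays non-degenerate. This holds because $I_c\perp I_{c'}$, as $(I_c,I_{c'})=(I_c\,c,\,I_{c'})$-type arguments or Lemma~\ref{orthogonal to ideal} give.
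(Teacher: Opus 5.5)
Your proof is correct and follows essentially the same route as the paper: you apply Proposition~\ref{decomposition} with $U=I_c$ (using Lemma~\ref{equal} to get $X_1=c$), read off $I_cI_{c'}=0$ and directness from $A_1\cap A_2=0$, and get simplicity because a proper nonzero ideal of $I_c$ is an ideal of $A$ containing no axes, hence lies in $R(A)=0$. Your directness argument (each $I_c$ meets the sum of the other blocks trivially) is just a repackaging of the paper's linear-relation argument, and the closing remark about the restricted form being non-degenerate is true but not needed.
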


\begin{proof}
The first claim follows from Proposition \ref{decomposition} since the axes from $c'$ 
are not contained in $U=I_c$. Furthermore, again by Proposition \ref{decomposition}, 
$I_c=A_1$ is a primitive axial algebra generated by $X_1=I_c\cap X=c$. Since $I_c$ is 
a direct summand of $A$, every ideal of $I_c$ is an ideal of $A$ and, clearly, if 
this ideal is proper in $I_c$ then it cannot contain any axes by Lemma \ref{equal}. 
Therefore, such an ideal would be in the radical $R(A)$ of $A$, hence trivial. We 
have shown that each $I_c$ is a simple primitive axial algebra.

By Lemma \ref{block decomposition}, the sum of all $I_c$, $c\in C$, is $A$. It 
remains to see that this sum is direct. Suppose that we have $0=\sum_{i=1}^k u_i$, 
where $u_i\in I_{c_i}$ for distinct $c_1,c_2,\ldots,c_k\in C$. Take $U=I_{c_k}$. 
Then, using the notation from Section \ref{complement}, $I_{c_k}=A_1$ and 
$I_{c_1},I_{c_2},\ldots,I_{c_{k-1}}\subseteq A_2$, as in Proposition \ref{decomposition}, 
and so $u_k=0$, since $A_1\cap A_2=0$. Clearly, this means that all $u_i$ are zero, 
and this yields the direct sum claim.
\end{proof}

We have shown that $A$ is a semisimple algebra, a direct sum of simple algebras.
We can now define $J_c$ as the sum of all blocks $I_{c'}$, $c'\neq c$. Clearly, 
$A=I_c\oplus J_c$, and so $J_c$ is a maximal ideal, since $A/J_c\cong I_c$ is simple. 
We note that the intersection of all the maximal ideals $J_c$ is trivial, which gives 
us the following.

\begin{corollary} \label{trivial Jacobson}
We have that $J(A)=0$.
\end{corollary}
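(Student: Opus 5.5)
The claim $J(A)=0$ is for a primitive axial algebra $A$ with a non-degenerate Frobenius form. The plan is to read it off from Proposition \ref{semisimple} by exhibiting enough maximal ideals with trivial intersection. By Proposition \ref{semisimple}, $A=\oplus_{c\in C}I_c$, where each $I_c$ is a simple primitive axial algebra. For each $c\in C$ set $J_c=\oplus_{c'\neq c}I_{c'}$. Then $J_c$ is an ideal of $A$, since the blocks annihilate each other and each block is an ideal, and $A=I_c\oplus J_c$.

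First I check that $J_c$ is maximal. An ideal $K$ with $J_c\subseteq K\subseteq A$ corresponds to an ideal of the quotient $A/J_c\cong I_c$. This correspondence is legitimate because $I_cJ_c=0$, so the projection $A\to I_c$ along $J_c$ is an algebra homomorphism. The image of $K$ in $I_c$ is then an ideal of $I_c$. Since $I_c$ is simple (and nonzero, as it contains an axis), that image is $0$ or $I_c$, giving $K=J_c$ or $K=A$. Hence $J_c$ is a maximal ideal.

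The main step is that $\bigcap_{c\in C}J_c=0$. Take $u$ in this intersection and write $u=\sum_i u_i$ with $u_i\in I_{c_i}$ for finitely many distinct classes $c_i$. For a fixed $c_j$, we have $u\in J_{c_j}$, so $u$ also has a decomposition with no $I_{c_j}$-component. By uniqueness of components in the direct sum, $u_j=0$. This holds for every $j$, so $u=0$.

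Therefore $J(A)\subseteq\bigcap_c J_c=0$. If $C$ were empty then $A=0$, which is excluded since $A$ contains axes. I expect no real obstacle here: all the substantive work (the direct sum decomposition and the simplicity of the blocks) was done in Proposition \ref{semisimple}. The only point needing care is that simplicity of $I_c$ as an algebra really yields maximality of $J_c$ in $A$, which is exactly the correspondence argument above.
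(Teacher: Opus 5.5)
Your proposal is correct and is the paper's own argument. The paper also takes the maximal ideals $J_c=\oplus_{c'\neq c}I_{c'}$ coming from Proposition \ref{semisimple}, uses $A/J_c\cong I_c$ simple to get maximality, and notes that $\bigcap_c J_c=0$. You spell out the maximality correspondence and the componentwise intersection check, which the paper leaves to the reader.
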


\section{Corollaries}

Let us now list several applications of Proposition \ref{semisimple} and Corollary 
\ref{trivial Jacobson}. We start with the general statement completing the proof of 
Theorem \ref{main}.  

\begin{corollary} \label{modulo A perp}
Suppose that $A$ is an axial algebra admitting a Frobenius form. Then $A/A^\perp$ is 
semisimple and so $J(A)\subseteq A^\perp$. 
\end{corollary}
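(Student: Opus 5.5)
The plan is to pass to the quotient $\bar A=A/A^\perp$ and apply the results of Section~\ref{semisimplicity} to it. First I will check that $\bar A$ is again a primitive axial algebra with a non-degenerate Frobenius form, so that Proposition~\ref{semisimple} and Corollary~\ref{trivial Jacobson} apply to it. Then I will pull the maximal ideals of $\bar A$ back to $A$.

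\textbf{Step 1: the quotient carries a non-degenerate form.} Since $A^\perp$ is an ideal, the form descends to $\bar A$ via $(u+A^\perp,v+A^\perp)=(u,v)$. This is well defined precisely because $A^\perp$ is the radical of the form. The induced form is still associative. It is non-degenerate: if $(u,v)=0$ for all $v$, then $u\in A^\perp$.

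\textbf{Step 2: the quotient is a primitive axial algebra.} Take $\bar X$ to be the set of images $\bar a$ of the axes $a\in X$ with $a\notin A^\perp$. By Lemma~\ref{axis}, these are exactly the axes with $(a,a)\neq 0$; axes lying in $A^\perp$ simply map to $0$.
\begin{itemize}
\item $\bar X$ generates $\bar A$, because $X$ generates $A$ and the discarded axes have zero image.
\item Each nonzero $\bar a$ is an idempotent, and $\ad_{\bar a}$ is induced by $\ad_a$. Hence the eigenspace decomposition and the fusion law pass to the quotient: $\bar A_\lm(\bar a)$ is the image of $A_\lm(a)$, using Lemma~\ref{component} to split elements of $A^\perp$ into components lying in $A^\perp$.
\end{itemize}

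\textbf{Main obstacle: primitivity of the axes in $\bar A$.} The $1$-eigenspace of $\bar a$ is the image of $A_1(a)=\F a$, but I must make sure no new $1$-eigenvectors appear. By Lemma~\ref{component}, $A^\perp$ decomposes compatibly with the eigenspaces of $a$, so $A/A^\perp=\bigoplus_\lm A_\lm(a)/(A_\lm(a)\cap A^\perp)$. Therefore $\bar A_1(\bar a)$ is the image of $\F a$, which is one-dimensional because $a\notin A^\perp$. This is the step I expect to need the most care.

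\textbf{Step 3: conclude.} By Proposition~\ref{semisimple}, $\bar A$ is a direct sum of simple algebras, so it is semisimple. By Corollary~\ref{trivial Jacobson}, $J(\bar A)=0$, so the maximal ideals of $\bar A$ intersect in zero.

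Maximal ideals of $\bar A$ correspond to maximal ideals of $A$ containing $A^\perp$, by the correspondence theorem. The intersection of these preimages is therefore exactly $A^\perp$. Since $J(A)$ is the intersection of all maximal ideals of $A$, it is contained in this smaller intersection, giving $J(A)\subseteq A^\perp$.

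If $\bar A=0$, i.e.\ $A^\perp=A$, the inclusion $J(A)\subseteq A^\perp$ is trivial and no argument is needed.

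Combined with the earlier Proposition showing $R(A)\subseteq J(A)$, this completes the proof of Theorem~\ref{main}.
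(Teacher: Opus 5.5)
Your proposal is correct and follows the paper's own argument. You pass to $A/A^\perp$, note that it is a primitive axial algebra with a non-degenerate Frobenius form, invoke Proposition~\ref{semisimple} and Corollary~\ref{trivial Jacobson}, and then pull the maximal ideals back to $A$. The paper only asserts that the quotient is a primitive axial algebra, so your check of primitivity via Lemma~\ref{component} fills in a step it leaves implicit.
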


Indeed, $A/A^\perp$ is a primitive axial algebra with a non-degenerate Frobenius 
form, and hence the claim follows from Proposition \ref{decomposition} and Corollary 
\ref{trivial Jacobson}.

An interesting case arises when the fusion law $\cF$ does not allow direct sums. (See 
Lemma \ref{direct sum} and the discussion after it.)

\begin{corollary}
Suppose that either $0\notin\cF$ or $0\in\cF$ but $0\notin 0\ast 0$. If an axial 
algebra $A$ admits a Frobenius form then $A/A^\perp$ is a simple algebra.
\end{corollary}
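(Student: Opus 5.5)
We pass to the quotient $\bar A=A/A^\perp$ and use the semisimple decomposition of Proposition \ref{semisimple}. The aim is to show that the fusion law rules out more than one summand.

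\textbf{Step 1: the quotient is axial with a non-degenerate form.} The radical $A^\perp$ is an ideal, so the Frobenius form descends to a non-degenerate Frobenius form on $\bar A$. The image $\bar X$ of $X$ generates $\bar A$. We check that each $\bar a$ with $a\notin A^\perp$ is a primitive axis for $\cF$:
\begin{itemize}
\item Semisimplicity and the fusion law pass to quotients by ideals, since $\ad_a$ preserves $A^\perp$ and the eigenspaces of $\ad_{\bar a}$ are the images of those of $\ad_a$.
\item Primitivity holds because $\bar A_1(\bar a)$ is the image of $A_1(a)=\F a$.
\end{itemize}
Axes lying in $A^\perp$ map to $0$ and are simply discarded. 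If $\bar A=0$ the claim is degenerate, and we regard the zero algebra as excluded or trivially handled. This is the same reduction already used in Corollary \ref{modulo A perp}.

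\textbf{Step 2: the decomposition and the contradiction.} By Proposition \ref{semisimple}, $\bar A=\oplus_{c\in C}I_c$, with each $(I_c,c)$ a simple primitive axial algebra and $I_cI_{c'}=0$ for $c\neq c'$. Suppose $|C|\geq 2$. Pick an axis $a\in c$ and take a nonzero $u\in I_{c'}$ with $c'\neq c$. Then $au=0$, so $0$ is an eigenvalue of $\ad_a$. Since $a$ is an axis, every eigenvalue lies in $\cF$, so $0\in\cF$.

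For $0\in 0\ast 0$, take nonzero $u\in I_{c'}$ with $u^2\neq 0$. Such a $u$ exists because a simple algebra generated by idempotents is not nil of index 2; indeed any axis $b\in c'$ works. Then $b\in A_0(a)$ and $b^2=b\neq 0$ is a nonzero element of $A_0(a)$. The fusion law gives $b^2\in A_{0\ast 0}(a)$, and $b^2=b$ lies in $A_0(a)$ with nonzero projection there. Hence $0\in 0\ast 0$. Alternatively, one can quote Lemma \ref{direct sum} directly for the direct sum $I_c\oplus\bigoplus_{c'\neq c}I_{c'}$, with $|J|\ge 2$. Either route contradicts the hypothesis, so $|C|=1$, and $\bar A=I_c$ is simple.

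\textbf{Main obstacle.} The main subtlety is Step 1, namely that $\bar A$ is genuinely a primitive axial algebra for the same $\cF$. It needs care with axes falling into $A^\perp$ and with $\bar A=0$. The reasoning in Step 2 itself is short.
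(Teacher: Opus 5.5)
Your proof is correct and takes the paper's route: pass to $A/A^\perp$, use its semisimple decomposition (Corollary \ref{modulo A perp}, i.e.\ Proposition \ref{semisimple}), and rule out two or more summands by Lemma \ref{direct sum}; your direct argument ($au=0$ forces $0\in\cF$, and $b^2=b\in A_0(a)$ forces $0\in 0\ast 0$) is just the ``only if'' half of that lemma. Your caveat about $A^\perp=A$ is well taken, since the paper explicitly allows the zero form. In that case $A/A^\perp=0$, which is an imprecision in the statement itself that the paper's one-line proof also leaves unaddressed.
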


Clearly, this follows from Corollary \ref{modulo A perp} and Lemma \ref{direct sum}.

We have already mentioned in the introduction that, whenever the Frobenius form 
satisfies the additional property that $(a,a)\neq 0$ for all $a\in X$ (i.e., all 
generating axes are non-singular for the Frobenius form), the chain of inclusions 
from Theorem \ref{main} turns into an equality: $R(A)=J(A)=A^\perp$. We now mention 
two specific classes of axial algebras where this is the case.

One of the most well studied classes of axial algebras is the class of algebras of 
Jordan type $\eta$, which are primitive axial algebras for the fusion law $\cJ(\eta)$ shown in Figure \ref{Jordan type}.
\begin{figure}[ht]
\renewcommand{\arraystretch}{1.6}
\begin{center}
\begin{tabular}{|c||c|c|c|}
\hline
$\star$ & $1$ &$0$&$\eta$\\
\hline  \hline
$1$&$1$& &$\eta$\\
\hline
$0$& &$0$&$\eta$\\
\hline
$\eta$&$\eta$&$\eta$&$1,0$\\
\hline
\end{tabular}
\end{center}
\caption{Jordan type fusion law $\cJ(\eta)$}\label{Jordan type}
\end{figure}
It was shown in \cite{hss2} that every algebra $A$ of Jordan type $\eta$ admits a 
unique Frobenius form, such that $(a,a)=1$ for every primitive axis $a\in A$. In 
particular, the axial radical $R(A)$ coincides with $A^\perp$. Combining this with 
Theorem \ref{main} yields the following.

\begin{corollary}
If $A$ is an algebra of Jordan type $\eta$ then $R(A)=J(A)=A^\perp$ and $A/A^\perp$ 
is semisimple.
\end{corollary}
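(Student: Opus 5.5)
The plan is to combine three ingredients: the cited result from \cite{hss2}, Lemma \ref{perp}, and Theorem \ref{main}. The semisimplicity claim will then follow from Corollary \ref{modulo A perp}.

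First, I will use \cite{hss2} to fix on $A$ the Frobenius form with $(a,a)=1$ for every primitive axis $a$, and in particular for every $a\in X$. Every generating axis is then non-singular for this form. Lemma \ref{perp} applies directly and gives $R(A)=A^\perp$. (Lemma \ref{axis} confirms that no axis lies in $A^\perp$, which is consistent.)

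Next, Theorem \ref{main} gives the chain $R(A)\subseteq J(A)\subseteq A^\perp$. Its first inclusion is the Proposition in the section on the Jacobson radical, and its second is Corollary \ref{modulo A perp}. Since the two ends of the chain coincide, the chain collapses to $R(A)=J(A)=A^\perp$.

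Finally, Corollary \ref{modulo A perp} states directly that $A/A^\perp$ is semisimple. It relies on $A/A^\perp$ being a primitive axial algebra with a non-degenerate Frobenius form, together with Proposition \ref{semisimple}.

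The only point needing care is that the form from \cite{hss2} is indeed a Frobenius form in the sense used here: bilinear and associating with the product. Also, $a\neq 0$ is automatic since axes are non-zero idempotents, so $(a,a)=1\neq 0$ is meaningful. Once this is checked, the argument is a direct assembly of earlier results, and I expect no real obstacle.
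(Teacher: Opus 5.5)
Your proposal is correct and follows the paper's argument: the Frobenius form with $(a,a)=1$ for all primitive axes gives $R(A)=A^\perp$ by Lemma \ref{perp}, Theorem \ref{main} then collapses the chain to $R(A)=J(A)=A^\perp$, and semisimplicity of $A/A^\perp$ is Corollary \ref{modulo A perp}. Your citation of Proposition \ref{semisimple} for that corollary, rather than the paper's Proposition \ref{decomposition} and Corollary \ref{trivial Jacobson}, is an immaterial difference.
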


The other much studied class of axial algebras is the class of algebras of Monster 
type $(\al,\bt)$. These are primitive axial algebras for the fusion law 
$\cM(\al,\bt)$ shown in Figure \ref{Monster type}.
\begin{figure}[ht]
\renewcommand{\arraystretch}{1.6}
\begin{center}
\begin{tabular}{|c||c|c|c|c|}
\hline
$\star$ & $1$ &$0$&$\alpha$& $\beta$\\
\hline \hline
$1$&$1$&&$\al$&$\bt$\\
\hline
$0$&&$0$&$\al$&$\bt$\\
\hline
$\al$&$\al$&$\al$&$1,0$&$\bt$\\
\hline
$\bt$&$\bt$&$\bt$&$\bt$&$1,0,\al$\\
\hline
\end{tabular}
\end{center}
\caption{Monster type fusion law $\cM(\al,\bt)$}\label{Monster type}
\end{figure}
There is no general result concerning the existence of a Frobenius form for algebras 
of Monster type $(\al,\bt)$. However, in an interesting subclass of this class, the 
existence of a Frobenius form is postulated as part of this subclass' axiomatics. 
Namely, Ivanov defined \emph{Majorana algebras}, modeled after the example of the 
Griess algebra for the Monster sporadic simple group, as the primitive algebras for 
the fusion law $\cM(\frac{1}{4},\frac{1}{32})\subseteq \F=\R$, admitting a 
positive-definite Frobenius form and satisfying a number of further axioms, which we 
omit here.

Clearly, since the Frobenius form on a Majorana algebra is positive-definite, it is 
non-degenerate. Hence we have the following.

\begin{corollary}
Every Majorana algebra is semisimple, i.e., a direct sum of simple Majorana algebras.
\end{corollary}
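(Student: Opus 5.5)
The plan is to apply Proposition \ref{semisimple} directly. A Majorana algebra $A$ is by definition a primitive axial algebra for the fusion law $\cM(\frac14,\frac1{32})$ carrying a positive-definite Frobenius form. Positive-definiteness means $(u,u)>0$ for every $u\neq 0$, so $A^\perp=0$ and the form is non-degenerate. In particular $(a,a)>0$ for every $a\in X$, which is consistent with Lemma \ref{axis}. So every standing assumption of Sections \ref{complement} and \ref{semisimplicity} holds.

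Proposition \ref{semisimple} then gives $A=\oplus_{c\in C}I_c$. Each summand $(I_c,c)$ is a simple primitive axial algebra generated by the axes in $c$, and $I_cI_{c'}=0$ for $c\neq c'$. By Corollary \ref{trivial Jacobson}, $J(A)=0$. This already gives semisimplicity in the sense of a direct sum of simple algebras.

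The remaining step, and the only one needing care, is to check that each summand $I_c$ is itself a Majorana algebra. First, the fusion law passes to summands. For $a\in c$ and $u\in I_c$, the components of $u$ with respect to $a$ lie in $I_c$ by Lemma \ref{component}. Hence $\ad_a$ restricted to $I_c$ is semisimple with eigenvalues in $\cM(\frac14,\frac1{32})$, and the fusion rules restrict. Primitivity is inherited because $(I_c)_1(a)\subseteq A_1(a)=\F a$. Second, the restriction of the form to $I_c$ is still a Frobenius form and still positive-definite.

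The further Majorana axioms, which the paper omits, are the potential obstacle. I would argue they transfer for one of two reasons. Some are conditions on pairs of axes, and these are checked inside the dihedral subalgebras generated by two axes from $c$; such a subalgebra lies in $I_c$ and is unchanged. Others are conditions on the form and on products, which are inherited by an ideal that is an orthogonal direct summand. Orthogonality holds here: for $u\in I_c$ and $v\in I_{c'}$ with $c\neq c'$, write $u$ as a sum of products. Frobenius associativity, together with $I_cI_{c'}=0$, then gives $(u,v)=0$ whenever $u\in I_c^2$. If some axiom turned out not to be local in this way, I would weaken the conclusion to ``a direct sum of simple primitive axial algebras of Monster type $(\frac14,\frac1{32})$ with positive-definite Frobenius forms''. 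That weaker statement is all that Proposition \ref{semisimple} directly guarantees.
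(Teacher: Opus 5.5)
Your proposal is correct and follows the paper's argument. Positive-definiteness gives $A^\perp=0$, so Proposition~\ref{semisimple} (with Corollary~\ref{trivial Jacobson}) yields $A=\oplus_{c\in C}I_c$ with each $(I_c,c)$ simple. Your extra check that each summand inherits the fusion law, primitivity, the positive-definite Frobenius form and the other Majorana axioms goes beyond the paper, which treats this as evident. The fallback to a weaker conclusion is unnecessary: universally quantified conditions restrict to any subalgebra, and conditions on pairs of axes from $c$ live inside $I_c$.
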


\section{Hull-kernel topology}

The hull-kernel topology is a natural topology on the set of all maximal ideals. It 
is defined as follows. Let $\cM$ be the set of all maximal ideals of $A$.

\begin{definition}
For a set $\cS\subseteq\cM$, its closure under the hull-kernel topology is 
$$
\bar\cS=\{J\in\cM\mid J\supseteq K\},
$$
where $K=\cap_{I\in\cS}I$.
\end{definition}

For general rings $A$, the hull-kernel topology may be quite non-trivial. We will see 
that this is not the case for primitive axial algebras $A$.

We start by reproving the claims in Proposition \ref{semisimple} under a weaker 
assumption. As in Section \ref{semisimplicity}, let $C$ be the set of equivalence 
classes for the mutual dominance equivalence relation on the set of generating axes 
$X$. Then $C$ can be used for indexing all blocks: for an equivalence class $c\in C$, 
we define $I_c=I_a$ for an arbitrary axis $a\in c$.

\begin{proposition}
Let $A$ be a primitive axial algebra with $J(A)=0$. Then
\begin{enumerate}
\item[(a)] every block $I_c$, $c\in C$, is a simple primitive axial algebra; and
\item[(b)] $A=\oplus_{c\in C}I_c$.
\end{enumerate}
\end{proposition}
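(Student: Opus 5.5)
The plan is to reduce everything to one structural fact: for every maximal ideal $M$ and every axis $a\in X$ with $a\notin M$, we have $A=M\oplus I_a$. Two preliminary remarks come first. Since $J(A)=0$, the Proposition in Section 4 gives $R(A)\subseteq J(A)=0$. Also, every nonzero $u$ lies outside some maximal ideal. In particular, for each $a\in X$ there is a maximal ideal $M$ with $a\notin M$, and maximality then gives $M+I_a=A$.

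\emph{Main step (expected obstacle).} We must show $M\cap I_a=0$. Since $J(A)=0$, it suffices to show $M\cap I_a\subseteq N$ for every maximal ideal $N$.
\begin{itemize}
\item If $a\in N$, then $I_a\subseteq N$ and there is nothing to prove.
\item If $N=M$, the inclusion is trivial.
\item Otherwise $a\notin N$ and $N\neq M$, so $N+I_a=A$ and $M+N=A$.
\end{itemize}
In the remaining case I would pass to the simple quotient $A/N$. The images of $M$ and of $I_a$ are both nonzero ideals there, so both equal $A/N$. By Lemma \ref{sum} applied to $A=M+N$ and to $A=M+I_a$, every axis lies in $M$ or $N$, and in $M$ or $I_a$. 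The first thing I would try is to use this splitting of $X$, together with $X$ generating $A$, to show that $(M\cap I_a)$ annihilates a generating set modulo $N$. From that I would conclude $M\cap I_a\subseteq N$, since a nonzero ideal of the simple algebra $A/N$ with zero annihilator condition cannot arise. If this direct route stalls, the fallback is to show that $M\cap I_a$ contains no axis of $X$ and is therefore inside $R(A)=0$. This is the step I expect to require the most care.

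\emph{Consequences.} Grant $A=M\oplus I_a$. Then $I_a\cong A/M$ is simple. Since $I_a$ is a direct summand, its ideals are ideals of $A$. By Lemma \ref{sum}, each axis of $X$ lies in $M$ or in $I_a$. Projecting onto $I_a$ along $M$ shows that $X\cap I_a$ generates $I_a$, so $I_a$ is a simple primitive axial algebra; this is (a). Moreover, if $b\in X\cap I_a$, then $I_b$ is a nonzero ideal of the simple algebra $I_a$, so $I_b=I_a$. Hence domination is symmetric and the classes $c\in C$ index the blocks exactly as in Section \ref{semisimplicity}.

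For (b), note that for $c\neq c'$ the ideal $I_c\cap I_{c'}$ is a proper ideal of the simple algebra $I_c$, hence zero, and so $I_cI_{c'}=0$. Lemma \ref{block decomposition} gives $A=\sum_c I_c$. For directness, fix $c$, choose $a\in c$, and choose $M$ maximal with $a\notin M$. Every axis not in $c$ is not in $I_a$, so it lies in $M$. Hence $\sum_{c'\neq c}I_{c'}\subseteq M$, and this sum meets $I_c$ trivially because $A=M\oplus I_c$. This gives $A=\oplus_{c\in C}I_c$.
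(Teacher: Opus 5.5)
\textbf{There is a gap in the main step.} Your derivation of (a) and (b) from $A=M\oplus I_a$ is sound. The problem is that this decomposition itself is never established. In the case $a\notin N$, $N\neq M$, you only list two strategies you ``would try'', and neither is carried out.

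\begin{itemize}
\item The annihilation route gives no reason why $(M\cap I_a)x\subseteq N$ for an axis $x\in M\setminus N$.
\item The fallback, showing that $M\cap I_a$ contains no axis of $X$, amounts to excluding an axis $b\in I_a\cap M$ strictly dominated by $a$. That is the very phenomenon the proposition is meant to rule out, and you give no means of excluding it.
\end{itemize}

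As written, the step you identified as the obstacle is not proved.

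\textbf{The missing observation is that this case cannot occur.} You already wrote down the fact that shows it. Since $M\neq N$ are maximal, $A=M+N$. Lemma~\ref{sum} applied to the axis $a$ itself gives $a\in M$ or $a\in N$. Since $a\notin M$, we get $a\in N$. So every maximal ideal other than $M$ contains $a$, hence contains $I_a$. Therefore $M\cap I_a\subseteq\bigcap_{N}N=J(A)=0$.

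With this line inserted, your argument is complete. It is also a bit more direct than the paper's. The paper fixes a maximal ideal $J$ not containing the block $I$ and treats $I\cap J$ and $I\cap J'$ as maximal $A$-ideals inside $I$. It then applies Lemma~\ref{sum} to the decomposition $I=(I\cap J)+(I\cap J')$ to show the two intersections coincide. Your route instead yields the sharper fact that, in any primitive axial algebra, an axis lies outside at most one maximal ideal. The paper's case $I\not\subseteq J'$ with $J'\neq J$ is therefore actually vacuous.
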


\begin{proof}
We first establish (a). Let $I=I_c$ be an arbitrary block. Since $J(A)=0$ and $I\neq 
0$, there exists a maximal ideal $J$ not containing $I$. Then $A=I+J$, since $J$ is 
maximal. Furthermore, $I/I\cap J\cong I+J/J=A/J$ is a simple algebra, which means 
that $I\cap J$ is a maximal ideal of $I$. If $J'$ is any other maximal ideal of $A$ 
then either $I\subseteq J'$ or $I\not\subseteq J'$ and then, similarly, $I\cap J'$ is 
a maximal ideal of $I$. 

We claim that necessarily $I\cap J=I\cap J'$. Suppose that this is not the case, that 
is, $I\cap J\neq I\cap J'$. Then $I=I\cap J+I\cap J'$. By Lemma \ref{sum}, $a\in 
I\cap J$ or $a\in I\cap J'$. In either case, we get a contradiction, because $I$ is 
not the smallest ideal of $A$ containing $a$. Thus, $I\cap J=I\cap J'$.

We have shown that $I\cap J$ is contained in all other maximal ideals $J'$ of $A$. 
Hence, $I\cap J\subseteq J(A)=0$. This gives us that $A=I\oplus J$, since 
$IJ\subseteq I\cap J=0$. However, this means that every ideal of $I$ is an ideal of 
$A$. If $K\neq 0$ is a proper ideal of $I$ then $K+J>J$ is a proper ideal of $A$; a 
contradiction, since $J$ is a maximal ideal. This shows that $I$ has no proper non-
zero ideals, that is, $I$ is simple, proving (a).

For (b), we first note that, for distinct blocks $I_c$ and $I_{c'}$, we have that 
$I_c\cap I_c'=0$, as it is an ideal in both $I_c$ and $I_{c'}$. Hence also 
$I_cI_{c'}\subseteq I_c\cap I_{c'}=0$.

We already know by Lemma \ref{block decomposition} that $A=\sum_{c\in C}I_c$. To see 
that this is a direct sum, suppose we have a non-trivial linear relation 
$\sum_{i=1}^ku_i=0$, where $u_1,u_2,\ldots,u_k$ are elements of pairwise distinct 
blocks $I_{c_1},I_{c_2},\ldots,I_{c_k}$. Assuming that $k$ is smallest possible, we 
must have that each $u_i$ is non-zero. Furthermore, $u_k=-u_1-u_2-\ldots-u_{k-1}\in 
J=I_{c_1}+I_{c_2}+\ldots+I_{c_{k-1}}$. On the one hand, this means that $I_{c_k}\cap 
J\neq 0$ and, since $I_{c_k}$ is simple, this yields $I_{c_k}\subseteq J$. On the 
other hand, $I_{c_k}J=0$ by the previous paragraph. Taking an axis $a\in c_k$, we now 
see that $a=a^2\in I_{c_k}J=0$; this contradiction proves (b).
\end{proof}

As a consequence, we have the following.

\begin{corollary} \label{modulo Jacobson}
If $A$ is a primitive axial algebra then $A/J(A)$ is a direct sum of simple primitive 
axial algebras.
\end{corollary}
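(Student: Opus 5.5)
The plan is to reduce to the preceding proposition by passing to the quotient $\bar A=A/J(A)$. Let $\pi\colon A\to\bar A$ be the natural map and $\bar X=\pi(X)\setminus\{0\}$.

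\textbf{Step 1: $\bar A$ is a primitive axial algebra.} For $a\in X$ we have $a\notin J(A)$. Indeed, by the earlier proposition $R(A)\subseteq J(A)$, but this only says $R(A)$ lies inside $J(A)$. To see that $J(A)$ itself contains no axes, suppose $a\in J(A)$. Then $I_a\subseteq J$ for every maximal ideal $J$. If $A$ has a maximal ideal $J$, then $A=\sum_{b\in X}I_b$ by Lemma \ref{block decomposition}, and not all $b\in X$ lie in $J$. So the real content is to show that no generating axis can lie in every maximal ideal.

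I expect this to be the main obstacle. If it turns out to be false in general, I will instead allow $\pi(a)$ to be $0$ for some $a$, simply discard such axes, and use $\bar X$ as the generating set. Images of axes are either $0$ or idempotents whose adjoint is semisimple with eigenvalues in $\cF$, since $\pi$ maps $A_\lm(a)$ onto $\bar A_\lm(\bar a)$ and these spaces span $\bar A$. The fusion law passes to quotients. For primitivity I use Lemma \ref{component}: the kernel $J(A)$ is compatible with the eigenspace decomposition, so $\bar A_1(\bar a)=\pi(A_1(a))=\F\bar a$. Thus $(\bar A,\bar X)$ is a primitive axial algebra, and the discarded axes do not matter for generation.

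\textbf{Step 2: $J(\bar A)=0$.} Maximal ideals of $\bar A$ correspond bijectively to maximal ideals of $A$ containing $J(A)$, and these are all maximal ideals of $A$. Hence their intersection in $\bar A$ is $\pi(J(A))=0$. In the degenerate case where $A$ has no maximal ideals, $J(A)=A$ and $\bar A=0$, which is the empty direct sum; I will note this separately.

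\textbf{Step 3: apply the previous proposition.} Applying the preceding proposition to $(\bar A,\bar X)$ shows that $\bar A=\oplus_{c\in C}I_c$, with each block $I_c$ a simple primitive axial algebra. This gives the corollary.
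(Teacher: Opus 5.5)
Your argument is correct and follows the paper's route. The paper obtains the corollary by applying the preceding proposition to $\bar A=A/J(A)$, which is a primitive axial algebra generated by the non-zero images of the axes and has $J(\bar A)=0$ by the correspondence of maximal ideals.

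You should delete the opening assertion that $a\notin J(A)$ for every $a\in X$. It is unproved, and it is equivalent to $R(A)=J(A)$, which the paper poses as an open question. So discarding the axes with $\pi(a)=0$ is not a contingency: it is the actual proof, and it is complete.
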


We can now identify the hull-kernel topology of $A$. 

\begin{proposition}
The hull-kernel topology of a primitive axial algebra $A$ is discrete.
\end{proposition}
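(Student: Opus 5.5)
We show that every subset $\cS\subseteq\cM$ is closed, i.e.\ $\bar\cS=\cS$; this is equivalent to discreteness. First we reduce modulo $J(A)$. Every maximal ideal of $A$ contains $J(A)$, so $J\mapsto J/J(A)$ is an inclusion-preserving bijection between $\cM$ and the set of maximal ideals of $\bar A=A/J(A)$, and it commutes with taking intersections. The closure operation is defined purely in terms of containment among ideals containing $J(A)$, so it suffices to work in $\bar A$.

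By Corollary \ref{modulo Jacobson}, $\bar A=\oplus_{c\in C}I_c$ with each $I_c$ a simple primitive axial algebra. Here we use the blocks of $\bar A$ itself; the images of the axes in $X$ that are non-zero form the generating set of $\bar A$. The main step is to identify all maximal ideals of $\bar A$. Let $J_c=\oplus_{c'\neq c}I_{c'}$. Since $\bar A/J_c\cong I_c$ is simple, $J_c$ is maximal.

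Conversely, let $J$ be any maximal ideal of $\bar A$. Then $J$ cannot contain every $I_c$, since otherwise $J=\bar A$. Choose $c$ with $I_c\not\subseteq J$. Because $\bar A$ is a direct sum, $I_c\cap J$ is an ideal of $I_c$, and it is proper, so by simplicity $I_c\cap J=0$. For $c'\neq c$ the block $I_{c'}$ is also simple. If $I_{c'}\not\subseteq J$, then $I_{c'}\cap J=0$, and so $J\subseteq$ the sum of the blocks contained in $J$. To avoid this bookkeeping it is simpler to argue via Lemma \ref{sum}. Since $J+I_c$ is an ideal strictly containing $J$, we have $\bar A=J+I_c$. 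The product $JI_c\subseteq J\cap I_c=0$, so $\bar A=J\oplus I_c$ and $J\cong \bar A/I_c\cong J_c$ as ideals. More directly, every axis $a$ in a class $c'\neq c$ lies in $J$ or in $I_c$ by Lemma \ref{sum}. It cannot lie in $I_c$, because $a\in I_{c'}$ and $I_{c'}\cap I_c=0$. Hence $a\in J$, so $I_{c'}\subseteq J$, giving $J_c\subseteq J$. Maximality of $J_c$ then forces $J=J_c$. Thus $\cM=\{J_c\mid c\in C\}$, and distinct classes give distinct ideals.

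Finally, take $\cS=\{J_c\mid c\in D\}$ for some $D\subseteq C$. Then $K=\cap_{c\in D}J_c=\oplus_{c\notin D}I_c$, which holds because the sum is direct and we can compare components. For $c_0\notin D$ we have $I_{c_0}\subseteq K$ but $I_{c_0}\not\subseteq J_{c_0}$, so $J_{c_0}\not\supseteq K$. Therefore $\bar\cS=\cS$. Every subset is closed, and the topology is discrete.

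The main obstacle is the classification of maximal ideals as exactly the $J_c$. In particular, one must check that $I_{c'}\cap I_c=0$ and that the axes of $c'$ generate $I_{c'}$ in $\bar A$. Both follow from the directness of the sum in Corollary \ref{modulo Jacobson}. A small additional point is the edge case $D=\emptyset$, where $K=\bar A$ is contained in no maximal ideal, so $\bar\cS=\emptyset$, as required.
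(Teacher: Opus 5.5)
Your proof is correct, and it has the same overall architecture as the paper's: pass to $\bar A=A/J(A)$, use $\bar A=\oplus_{c\in C}I_c$ from Corollary~\ref{modulo Jacobson}, identify the relevant ideals, and compute closures. The middle step is done differently.

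The paper asserts that \emph{every} ideal of $\bar A$ has the form $\oplus_{c\in S}I_c$, calling this ``not specific to axial algebras'' and skipping the details. Discreteness then follows from the ideal lattice being the Boolean lattice of subsets of $C$. You classify only the \emph{maximal} ideals, using the axial-specific Lemma~\ref{sum}. From $\bar A=J+I_c$, every axis of a class $c'\neq c$ lies in $J$, since it cannot lie in $I_c$. Hence $J_c\subseteq J$, and so $J=J_c$. This is more economical and fully explicit, at the cost of relying on primitivity of the nonzero images of the axes in $\bar A$. You do note that this holds.

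The paper's general route instead needs the annihilator argument it omits. For $u\in J$ and $I_c\not\subseteq J$, we have $u_cI_c=uI_c\subseteq J\cap I_c=0$. Also $\{x\in I_c\mid xI_c=0\}=0$, because $I_c$ is simple and contains an idempotent. Hence $u_c=0$.

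The sentence you abandon, ``$J\subseteq$ the sum of the blocks contained in $J$'', is exactly that route. As written, its ``and so'' is unjustified. The remark ``$J\cong J_c$'' gives an isomorphism, not the needed equality. Both should simply be deleted, since your Lemma~\ref{sum} argument supersedes them.
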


This is a consequence of Corollary \ref{modulo Jacobson}. Clearly, there is a natural 
bijective correspondence between the maximal ideals of $A$ and of $\bar A=A/J(A)$. 
Hence we can assume that $J(A)=0$ and so $A=\oplus_{c\in C}I_c$ is a direct sum of 
simple algebras. From this, it can be shown that every ideal of $A$ is of the form 
$\oplus_{c\in S}I_c$ for a subset $S\subseteq C$. Consequently, the lattice of all 
ideals of $A$ is the same as the lattice of all subsets of $C$, and this implies 
in turn that the hull-kernel topology is discrete. Note that this argument is quite 
general and not specific to axial algebras. Hence we skip the details.  

\section{Further questions}

In this final section, we discuss our results and mention some related questions 
that remain open. Our main resut is that in every primitive axial algebra $(A,X)$ 
admitting a Frobenius form the three radicals, the form radical $A^{\perp}$, the 
Jacobson radical $J(A)$ and the axial radical $R(A)$ are included in one another as 
follows: 
$$
R(A)\subseteq J(A)\subseteq A^\perp.
$$ 
Furthermore, under a mild and natural extra condition that $(a,a)\neq 0$ for all 
$a\in X$, we have that $R=J(A)=A^\perp$. Can the same ultimate conclusion be reached 
without this extra condition? Note that $A^\perp$ can be artificially inflated. 
Indeed, the Frobenius form induced on the semisimple factor algebra $A/J(A)$ can be 
chosen individually on each simple summand. In particular, we can make the Frobenius 
form zero on all summands but one, in which case $A^\perp$ is definitely bigger than
$J(A)$ as long as $A/J(A)$ is not simple. Hence, we do not believe that the equality 
$J(A)=A^\perp$ can be shown without the additional condition above.

On the other hand, we do not see any simple tricks allowing to make $R(A)$ and $J(A)$ 
distinct. Based on this we pose the following question.

\begin{question}
Is it true that the axial radical $R(A)$ and the Jacobson radical $J(A)$ coincide in 
every primitive axial algebra? Equivalently, is it true that $J(A)$ never contains 
primitive axes?
\end{question}

Note that here we are not assuming that $A$ admits a Frobenius form, as it seems 
unrelated to the main point of this question. In fact, the existence of a non-zero 
Frobenius form seems to be a property of the factor algebra $A/J(A)$. Namely, $A$ 
admits such a Frobenius form if and only if at least one simple summand of $A/J(A)$ 
admits such a form.

Related to the above question are the following questions concerning the structure of 
blocks and the domination relation.

\begin{question}
Is it true that each axial block is indecomposable?
\end{question}

\begin{question}
Can there be non-trivial embeddings between blocks, i.e., can the dominance relation 
to be non-symmetric?
\end{question}

We also want to mention the relation of our results and the above questions to the 
concept of the non-annihilation graph introduced in \cite{kms}.

\begin{definition}
Suppose that $(A,X)$ is a primitive axial algebra. The \emph{non-annihilation graph} 
$\Dl$ has $X$ as its set of vertices and distinct $a,b\in X$ are adjacent in the 
non-annihilation graph if and only if $ab\neq 0$.  
\end{definition}

The following question was first posed in \cite{kms} in the context of Majorana 
algebras and then also repeated in a more general seting in the survey \cite{ms}. 

\begin{question}
Is it true that in the finest (direct) sum decomposition of a primitive axial algebra 
the summands correspond to the connected components of the non-annihilation graph 
$\Dl$?
\end{question}

When $A$ is decomposed as a direct sum of primitive axial algebras then, clearly, 
each connected component of $\Dl$ is fully connected in one of the summand algebras. 
hence the issue is whether an indecomposable primitive axial algebra can have a 
disconnected non-annihilation graph $\Dl$.

We have shown that, in the class of Majorana algebras, indecomposable algebras are 
simple. Hence the above question is equivalent to the following.

\begin{question}
Is it true that the non-annihilation graph $\Dl$ of a simple Majorana algebra is 
necessarily connected?
\end{question}

\Addresses
	  
\end{document}